\documentclass{birkjour}
\usepackage[english]{babel}
\usepackage{amsmath,amssymb,amsthm,amscd}
\usepackage{paralist,prettyref} 
\usepackage{hyperref}
\usepackage{color}

\newcommand{\Blue}[1]{\color{blue}#1\color{black}}


\newtheorem{thm}{Theorem}[section]

\newtheorem{lem}[thm]{Lemma}

\theoremstyle{definition}

\theoremstyle{remark}

\numberwithin{equation}{section}

\newrefformat{cor}{Corollary~\ref{#1}}
\newrefformat{defn}{Definition~\ref{#1}}
\newrefformat{eq}{(\ref{#1})}
\newrefformat{lem}{Lemma~\ref{#1}}
\newrefformat{prob}{problem~(\ref{#1})}
\newrefformat{prop}{Proposition~\ref{#1}}
\newrefformat{thm}{Theorem~\ref{#1}}
\newrefformat{rem}{Remark~\ref{#1}}

\newcommand{\bbB}{\mathbb B}
\newcommand{\bbC}{\mathbb C}

\newcommand{\bbE}{\mathbb E}

\newcommand{\bbR}{\mathbb R}
\newcommand{\bbX}{\mathbb X}

\newcommand{\calA}{\mathcal A}
\newcommand{\calB}{\mathcal B}

\newcommand{\calL}{\mathcal L}


\newcommand{\del}{\partial}


\newcommand{\eps}{\varepsilon}


\newcommand{\0}{\mspace{0mu}_0}       
\renewcommand{\Re}{\operatorname{Re}}

\newcommand{\tr}{\operatorname{tr}}

\begin{document}

\title[Regularity and long-time behavior for the Westervelt equation]{Optimal regularity and long-time behavior of solutions for the Westervelt equation}

\author[S. Meyer]{Stefan Meyer}
\address{%
Martin-Luther-Universität Halle-Wittenberg\\
Naturwissenschaftliche Fakult\"{a}t II\\
Institut f\"{u}r Mathematik\\
06099 Halle (Saale)}
\email{stefan.meyer@mathematik.uni-halle.de}

\author[M. Wilke]{Mathias Wilke} 
\address{%
Martin-Luther-Universität Halle-Wittenberg\\
Naturwissenschaftliche Fakult\"{a}t II\\
Institut f\"{u}r Mathematik\\
06099 Halle (Saale)}
\email{mathias.wilke@mathematik.uni-halle.de}

\subjclass{35B30, 35B35, 35B40, 35B65, 35Q35}

\keywords{Westervelt equation, optimal regularity, quasilinear parabolic system, exponential stability\footnote{This manuscript is published in Appl. Math. Optim., \textbf{64} (2011), 257--271.  The final publication is available at Springer via \url{http://dx.doi.org/10.1007/s00245-011-9138-9}}}

\date{\today}
\dedicatory{Dedicated to Jan Pr\"{u}ss on the occasion of his 60th birthday.}

\begin{abstract}
We investigate an initial-boundary value problem for the quasilinear Westervelt equation which models the propagation of sound in fluidic media. We prove that, if the initial data are sufficiently small and regular, then there exists a unique global solution with optimal $L_p$-regularity. We show furthermore that the solution converges to zero at an exponential rate as time tends to infinity. Our techniques are based on maximal $L_p$-regularity for abstract quasilinear parabolic equations.
\end{abstract}

\maketitle

\section{Introduction and Notations}

The aim of this paper is to enhance the mathematical understanding of the Westervelt equation, which was proposed and analyzed in \cite{ClKaVe09,Kal10,KaLa09}:
\begin{align}\label{eq:Westervelt_1}
  u'_{tt} - c^2\Delta_x u' - b \Delta_x u'_t &= k (u'^2)_{tt}.
\end{align}
Here $u'(t,x)=u(t,x)-u_0$ denotes the acoustic pressure fluctuation at time $t$ and position $x$. Furthermore, $c>0$ denotes the velocity of sound, $b>0$ the diffusivity of sound and $k>0$ the parameter of nonlinearity. This equation is used to describe the propagation of sound in fluid media. It can be derived from the balances of mass and momentum (the compressible Navier-Stokes equations for Newtonian fluids) and a state equation for the pressure-dependent density of the fluid. A generalization of \eqref{eq:Westervelt_1} is given by Kuznetsov's equation:
\begin{align}\label{eq:Kuznetsov}
  u'_{tt} - c^2\Delta_x u' - b \Delta_x u'_t &= k (u'^2)_{tt}+|\nabla v'|^2_{tt}.
\end{align}
Here the velocity fluctuation $v'(t,x)=v(t,x)-v_0$ is related to the pressure fluctuation by means of an acoustic potential $\psi(t,x)$, such that $u'=\rho_0\psi_t$, $v'=-\nabla\psi$. This equation is used as a basic equation in nonlinear acoustics, see \cite{HaBl98,Kal07,LeSeWo08}. We refer to \cite{Kal07} for a derivation of Kuznetsov's equation.

To avoid any confusion, we mention that in the engineering literature \cite{HaBl98,Kal07,LeSeWo08} also the following equation is termed Westervelt equation.
\begin{align*}
  u'_{tt} - c^2\Delta_x u' + \frac{b}{c^2} u'_{ttt} &= k (u'^2)_{tt}.
\end{align*}
This equation is derived in \cite{HaBl98}, for instance. However, we may still view \eqref{eq:Westervelt_1} as a simplification of \eqref{eq:Kuznetsov}.

Let $\Omega\subset\bbR^n$ be a bounded $C^2$-domain, let $J=(0,T)$, $T\in(0,\infty]$ and let $u_0,u_1$ be given functions on $\Omega$. Then we consider the following initial-boundary value problem for the Westervelt equation.
\begin{align}\label{eq:Westervelt}
   \left\{\begin{aligned}
      u_{tt} - c^2\Delta u - b \Delta u_t &= k (u^2)_{tt} &&\quad\text{in }J\times\Omega,\\
      u|_{\del\Omega} &= 0 &&\quad\text{in }J\times\del\Omega\\
      (u(0),u_t(0)) &= (u_0,u_1) &&\quad\text{in }\Omega.
   \end{aligned}\right.
\end{align}
Here $u:J\times\Omega\to\bbR$ is the unknown function and $(u_0,u_1):\Omega\to\bbR^2$ are the given initial data. In this paper we prove existence and uniqueness of global strong solutions for small initial data in anisotropic Sobolev-Slobodecki\u{\i} spaces:

\begin{thm}\label{thm:GLOBWP}
  Let $n\in\mathbb{N}$, $p>\max\{n/2,n/4+1\}$, $p\neq 3/2$ and suppose that the initial data $(u_0,u_1)$ satisfy the regularity and compatibility conditions
  \begin{align}\label{eq:RegData}
  \begin{split}
    u_0&\in \{v\in W^2_p(\Omega) : v|_{\del\Omega}=0 \},\\
  u_1&\in\begin{cases}W_p^{2-2/p}(\Omega)&,\ \text{if}\ p<3/2,\\
  					\{v\in W_p^{2-2/p}(\Omega): v|_{\partial\Omega}=0\}&,\ \text{if}\ p>3/2.\end{cases}
   \end{split}
   \end{align}
  Then for every $T\in(0,\infty]$ there is $\rho>0$ such that the smallness condition
  \begin{align*}
    \|u_0\|_{W^2_p}+\|u_1\|_{W^{2-2/p}_p}<\rho
  \end{align*}
  implies that problem \eqref{eq:Westervelt} admits a unique solution
  \begin{align}\label{eq:RegSol}
    u\in W^2_p(0,T;L_p(\Omega)) \cap W_p^1(0,T;W^2_p(\Omega)).
  \end{align}
  The solution map $(u_0,u_1) \mapsto u$ is a local isomorphism in these spaces.
\end{thm}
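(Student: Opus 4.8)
The plan is to recast \eqref{eq:Westervelt} as a fixed-point problem built on the maximal $L_p$-regularity of its linearization, and then to promote the local small-data solution to a globally decaying one by working in exponentially weighted spaces. First I would expand $(u^2)_{tt}=2u_t^2+2u\,u_{tt}$ and rewrite the equation as
\[
  u_{tt}-b\Delta u_t-c^2\Delta u=N(u),\qquad N(u):=2k\,u_t^2+2k\,u\,u_{tt},
\]
so that the left-hand side is a strongly damped wave operator and all nonlinearities, including the quasilinear top-order term $u\,u_{tt}$, are collected in $N$. With the solution space
\[
  \mathbb{E}(J):=W^2_p(J;L_p(\Omega))\cap W^1_p(J;W^2_p(\Omega)),
\]
an element $u\in\mathbb{E}(J)$ satisfies $u\in C(\bar J;W^2_p)$ and $u_t\in C(\bar J;W^{2-2/p}_p)$, which is precisely where the regularity and compatibility conditions \eqref{eq:RegData} originate; the exclusion $p\neq3/2$ reflects the threshold $2-2/p=1/p$ at which the trace of $u_t(0)$ on $\partial\Omega$ is no longer defined.

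Next I would establish maximal $L_p$-regularity for the linear problem $u_{tt}-b\Delta u_t-c^2\Delta u=f$ with $u|_{\partial\Omega}=0$ and $(u(0),u_t(0))=(u_0,u_1)$, i.e. that the solution operator $L$ from $\mathbb{E}(J)$ onto $L_p(J;L_p)\times W^2_p\times W^{2-2/p}_p$ is an isomorphism. Writing $A=-\Delta$ under Dirichlet conditions on $L_p(\Omega)$ and $v=u_t$ turns the equation into $v_t+bAv=f-c^2Au$ together with $u_t=v$; since $A$ is invertible and $\mathcal R$-sectorial, the strongly damped wave operator possesses maximal regularity. Because $\Omega$ is bounded, $\sigma(A)$ is bounded away from $0$, and the roots of $\lambda^2+b\mu\lambda+c^2\mu=0$ for $\mu\in\sigma(A)$ have real parts below some $-\omega_0<0$; hence the associated semigroup is exponentially stable and $L$ remains an isomorphism after the substitution $u\mapsto e^{\omega t}u$ for every $0<\omega<\omega_0$, with a bound uniform in $T$. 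This yields maximal regularity on the half-line in the exponentially weighted analogue of $\mathbb{E}(\mathbb{R}_+)$, which is the key both to reaching $T=\infty$ and to exponential decay.

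The analytic heart is the nonlinear estimate showing that $N$ is quadratic with controlled constants. I would bound
\[
  \|N(u)\|_{L_p(L_p)}\le 2k\bigl(\|u_t\|_{L_{2p}(J\times\Omega)}^2+\|u\|_{L_\infty(J\times\Omega)}\,\|u_{tt}\|_{L_p(L_p)}\bigr).
\]
The factor $u$ in $u\,u_{tt}$ must lie in $L_\infty$: as $u\in C(\bar J;W^2_p)$ and $W^2_p(\Omega)\hookrightarrow L_\infty(\Omega)$ exactly for $p>n/2$, this produces the condition $p>n/2$. The factor $u_t$ in $u_t^2$ must lie in $L_{2p}$: as $u_t\in C(\bar J;W^{2-2/p}_p)$ and $W^{2-2/p}_p(\Omega)\hookrightarrow L_{2p}(\Omega)$ exactly for $p\ge n/4+1$, this produces the condition $p>n/4+1$; together they yield $p>\max\{n/2,n/4+1\}$. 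The same bilinear structure delivers the Lipschitz estimate
\[
  \|N(u)-N(\bar u)\|_{L_p(L_p)}\le C\bigl(\|u\|_{\mathbb{E}(J)}+\|\bar u\|_{\mathbb{E}(J)}\bigr)\|u-\bar u\|_{\mathbb{E}(J)},
\]
and for $u$ small in $L_\infty$ the implicit coefficient $1-2ku$ stays close to $1$, so the top-order term $u\,u_{tt}$ is a genuine perturbation rather than a change of principal part.

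Finally I would solve the fixed-point equation $u=L^{-1}(N(u);u_0,u_1)$ on a small ball $B_R\subset\mathbb{E}(J)$: the quadratic bound gives the self-mapping property $\|L^{-1}\|(CR^2+\rho)\le R$ once $R$ and $\rho$ are small, while the Lipschitz estimate gives a contraction constant $\le 2C\|L^{-1}\|R<1$, so the contraction principle yields the unique solution \eqref{eq:RegSol}. Running the identical argument in the exponentially weighted space uses that a quadratic $N$ gains the weight—$N$ of an $e^{-\omega t}$-decaying function decays like $e^{-2\omega t}$—so the weighted estimates close and the fixed point inherits the decay $\|u(t)\|\lesssim e^{-\omega t}$; this simultaneously handles $T=\infty$ and subsumes all finite $T$. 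The solution map $(u_0,u_1)\mapsto u$ is then a local isomorphism because $N$ is polynomial and the fixed point depends smoothly on the data, e.g. via the contraction principle with parameters or the implicit function theorem. The main obstacle I anticipate is the linear step: securing maximal $L_p$-regularity for the strongly damped wave operator together with the exponential weight uniformly in $T$, and verifying carefully that the quadratic nonlinearity tolerates that weight—once this is in place, the nonlinear estimates at the critical exponents $p>\max\{n/2,n/4+1\}$ and the contraction are comparatively routine.
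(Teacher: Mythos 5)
Your proposal is correct, but it takes a genuinely different route from the paper. You leave the constant-coefficient strongly damped wave operator on the left and treat the entire nonlinearity $N(u)=2k u_t^2+2k u\,u_{tt}$ --- including the quasilinear top-order term --- as a right-hand-side perturbation, then run a contraction in $W^2_p(J;L_p)\cap W^1_p(J;W^2_p)$ and its exponentially weighted analogue. The paper instead divides the equation by $1-2ku$ and absorbs $u\,u_{tt}$ into the principal part: problem \eqref{eq:Westervelt} becomes the quasilinear system \eqref{eq:Westervelt_2}, $v_t+\calA(v)v=F(v)$ for $v=(u,u_t)$, with the variable coefficient $(1-2kv_1)^{-1}$ inside the elliptic operator (whence Theorem~\ref{thm:MaxReg_A} for $-a(x)\Delta_D$ is needed); weighted half-line maximal regularity for the frozen linearization is obtained from essentially the same spectral computation you sketch (Lemma~\ref{lem:spectral_bound} gives $\lambda_0=\min\{\tfrac b2\lambda_1(A),\tfrac{c^2}b\}$, and Dore's theorem removes the shift in Theorem~\ref{thm:MaxReg_calA}); and the nonlinear step is done via the implicit function theorem applied to $H(v,v_0)=(v_t+\calA(v)v-F(v),\tr_{t=0}v-v_0)$ rather than via Banach's fixed point theorem. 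Both routes rest on the same embeddings, which is why you recover the identical exponents $p>\max\{n/2,n/4+1\}$, the trace space $W^{2-2/p}_p$ and the exclusion $p\neq 3/2$. What each buys: your perturbative scheme needs only constant-coefficient maximal regularity and is more elementary, which fully suffices in the small-data regime claimed here; the paper's quasilinear formulation keeps parabolicity under the mere pointwise bound $\|u\|_\infty<1/(2k)$ rather than smallness of the full solution norm, delivers the local isomorphism of the solution map directly and smoothly from the implicit function theorem, and installs exactly the machinery (variable-coefficient maximal regularity, $C^\infty$ substitution operators) that Section 4 reuses for higher regularity and the decay of $u_t$, $u_{tt}$ via \cite[Theorem 5.1]{Pru02b}. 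One small correction: your remark that a quadratic $N$ ``gains the weight'' ($e^{-2\omega t}$ decay) is stronger than what your argument uses or needs; the estimate that actually closes the weighted contraction is weight \emph{preservation} with a quadratic bound, e.g. $\|e^{\omega t}N(u)\|_{L_p(L_p)}\lesssim \|u\|_{L_\infty(J\times\Omega)}\,\|e^{\omega t}u_{tt}\|_{L_p(L_p)}+\|u_t\|_{L_\infty(J;L_{2p})}\,\|e^{\omega t}u_t\|_{L_p(J;L_{2p})}$, with one factor taken in an unweighted sup-type norm --- which is precisely how the paper estimates the substitution operators $\tilde G_1$, $\tilde G_2$ in Step 2 of the proof of Theorem~\ref{thmLWP}.
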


This is an extension of Theorems 1.1 and 1.2 of \cite{KaLa09}, where only the case $p=2$, $n\in\{1,2,3\}$ and $\|u_0\|_{W^2_2}+\|u_1\|_{W^2_2}<\rho$ is considered. Our result also implies that the regularity class in \cite[Theorem 1.1]{KaLa09} is not optimal. In order to provide optimal regularity results for the solutions of \eqref{eq:Westervelt}, we employ the powerful concept of maximal $L_p$-regularity (see Section 2). To this end we first rewrite \eqref{eq:Westervelt} as a quasilinear evolution equation, using a smallness condition like \cite[Assumption 2.2]{KaLa09} which guarantees that \eqref{eq:Westervelt} is a parabolic problem. Then we can apply the methods of the abstract parabolic theory, cf. \cite{Ama95,DHP03,DHP07,Pru02b}.

Trace theory implies that the values $u(t)$, $u_t(t)$ at $t\ge 0$ are well-defined and for each $t\ge 0$ the trace map $[u \mapsto (u(t), u_t(t))]$ is bounded from the class of solutions \eqref{eq:RegSol} into the class of the data \eqref{eq:RegData}. This means that the initial regularity is preserved for all $t>0$. Concerning the asymptotic behavior of solutions we prove that the equilibrium $u=0$ is exponentially stable. To be precise, we show the following result.
\begin{thm}\label{thm:ExpDec}
  Let $u$ be the solution in \prettyref{thm:GLOBWP}. Then there exist constants $C,\omega>0$ such that
  $$\|u(t)\|_{W^2_p} + \|u_t(t)\|_{W^{2-2/p}_p} \leq C e^{-\omega t} \left( \|u_0\|_{W_p^2}+\|u_1\|_{W_p^{2-2/p}} \right), \quad t\geq 0.$$
  Moreover, it holds that
  $$[t\mapsto e^{\omega t}u_t(t)]\in W^2_p(\delta,\infty;L_p(\Omega)) \cap W_p^1(\delta,\infty;W^2_p(\Omega))$$
  for each $\delta>0$ and
  $$e^{\omega t}\left( \|u_t(t)\|_{W^2_p} + \|u_{tt}(t)\|_{W^{2-2/p}_p} \right)\to 0, \quad\text{as}\quad  t\to\infty.$$
\end{thm}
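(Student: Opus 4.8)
The plan is to combine the exponential stability of the linearization at the equilibrium $u=0$ with maximal $L_p$-regularity on the \emph{half-line}, transported into an exponentially weighted space, and then to bootstrap the decay to the time derivatives by a restart argument. Throughout I write $\bbE(I)=W^2_p(I;L_p(\Omega))\cap W^1_p(I;W^2_p(\Omega))$ for the solution class of \prettyref{thm:GLOBWP}.

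First I would fix $\omega>0$ and pass to the weighted unknown $v(t)=e^{\omega t}u(t)$. Substituting $u=e^{-\omega t}v$ into \eqref{eq:Westervelt} and using $(u^2)_{tt}=2u_t^2+2uu_{tt}$ turns the equation into
$$v_{tt}-b\Delta v_t-c^2\Delta v+b\omega\Delta v-2\omega v_t+\omega^2 v=e^{-\omega t}N(v),$$
where $N(v)$ collects the quadratic Westervelt nonlinearity, now carrying an extra decaying factor $e^{-\omega t}\le 1$. The linear part is an $\omega$-perturbation of the strongly damped wave operator; written as a first-order system in $(v,v_t)$, its generator $\mathcal A_\omega$ has, for each Dirichlet eigenvalue $\mu_j>0$ of $-\Delta$, the two eigenvalues solving $\lambda^2+(b\mu_j-2\omega)\lambda+(c^2\mu_j-b\omega\mu_j+\omega^2)=0$. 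For $\omega<\min\{b\mu_1/2,\,c^2/b\}$ both coefficients of this monic quadratic are positive, so by Routh--Hurwitz both roots have negative real part; as $\mu_j\to\infty$ the slow root tends to $-(c^2-b\omega)/b$, whence $\sup_j\Re\lambda\le-\omega_0<0$ uniformly. Thus $\mathcal A_\omega$ generates an exponentially stable analytic semigroup and, being a lower-order perturbation of an operator with maximal $L_p$-regularity, it retains maximal $L_p$-regularity on all of $\bbR_+$.

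With maximal regularity on $\bbR_+$ in hand, I would rerun the fixed-point scheme of \prettyref{thm:GLOBWP}, now in $\bbE(\bbR_+)$: the quadratic estimates for $N(v)$ are unchanged, and the factor $e^{-\omega t}$ only improves the contraction constant, so for data of size $<\rho$ one obtains $v=e^{\omega t}u\in\bbE(\bbR_+)$. Since $\bbE(\bbR_+)\hookrightarrow BUC(\bbR_+;W^2_p)$ for the function and $\hookrightarrow BUC(\bbR_+;W^{2-2/p}_p)$ for its time derivative, boundedness of $\|v\|_{\bbE(\bbR_+)}$ by the data norm gives at once the first asserted estimate for $\|u(t)\|_{W^2_p}$ and $\|u_t(t)\|_{W^{2-2/p}_p}$. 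For the remaining statements I would differentiate the equation in time: setting $w=u_t$ one finds that $w$ solves a Westervelt-type equation with the same principal linear part, quasilinear coefficient $1-2ku$, and nonlinearity $6k\,w\,w_t$. By the instantaneous smoothing inherent in the maximal $L_p$-regularity framework (cf.\ \cite{Pru02b}) the trace $(w(\delta),w_t(\delta))=(u_t(\delta),u_{tt}(\delta))$ lies in the data class \eqref{eq:RegData} for every $\delta>0$, and it is small because $u$ is small on all of $\bbR_+$; applying the weighted argument above to $w$ on $(\delta,\infty)$ yields $e^{\omega t}u_t\in\bbE(\delta,\infty)$. Finally, functions in $\bbE(\delta,\infty)$ have traces in $C_0$ with respect to the trace norm, since $\|f(t)\|_{(L_p,W^2_p)_{1-1/p,p}}$ is controlled by the tail $\int_t^\infty(\|f\|_{W^2_p}^p+\|f_t\|_{L_p}^p)\,ds\to0$; applied to $f=e^{\omega t}u_t$ this gives $e^{\omega t}\|u_t(t)\|_{W^2_p}\to0$ and $e^{\omega t}\|u_{tt}(t)\|_{W^{2-2/p}_p}\to0$.

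The step I expect to be most delicate is the interplay between the exponential weight and maximal regularity on the \emph{unbounded} interval: one needs not merely stability of the linearized semigroup but that $\mathcal A_\omega$ retains maximal $L_p$-regularity on all of $\bbR_+$, which is exactly what makes the contraction close globally in time and produces a genuine weighted bound rather than one blowing up as $T\to\infty$. The spectral computation fixing the admissible range of $\omega$, and the verification that the quasilinear coefficient $1-2ku$ stays in the region of parabolicity for every $t\ge0$ (using $\|u(t)\|_{L_\infty}\lesssim\|u(t)\|_{W^2_p}$, valid since $p>n/2$, to keep $|2ku|<\tfrac12$), are where the smallness of the data must be used quantitatively. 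The restart for $w=u_t$ additionally requires the Dirichlet compatibility of $w(\delta)$, which holds because $u_t$ vanishes on $\partial\Omega$ for $p>3/2$.
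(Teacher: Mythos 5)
Your treatment of the first estimate is essentially the paper's argument in different packaging: substituting $v=e^{\omega t}u$ is equivalent to working in the weighted spaces $e^{-\omega}\bbE(\bbR_+)$ of \prettyref{thmLWP}, and your admissible range $\omega<\min\{b\mu_1/2,\,c^2/b\}$ is exactly the paper's $\lambda_0$ from \prettyref{lem:spectral_bound} (with $\mu_1=\lambda_1(A)$). One step is incompletely justified, though: a bounded (lower-order) perturbation of an operator with maximal $L_p$-regularity retains that property only on finite intervals, i.e.\ after a shift, as in \prettyref{thm:A_principal_R-sectorial}; concluding maximal regularity of the \emph{unshifted} operator on all of $\bbR__+$ is not a perturbation statement but requires combining finite-interval maximal regularity with the negative spectral bound via Dore's theorem \cite[Theorem 2.4]{Dor93}, which is precisely how \prettyref{thm:MaxReg_calA} is proved. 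Since you computed the spectral bound anyway, this is repairable; likewise, replacing the paper's implicit-function-theorem argument (with the mean-value representation of $\phi$) by a contraction scheme is a legitimate variant for obtaining the linear bound in the data.

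The genuine gap is in your bootstrap for $w=u_t$. You restart at $t=\delta$ and treat the differentiated equation $(1-2ku)w_{tt}-c^2\Delta w-b\Delta w_t=6k\,ww_t$ as another small-data problem, asserting that $(w(\delta),w_t(\delta))=(u_t(\delta),u_{tt}(\delta))$ ``is small because $u$ is small.'' But your scheme needs smallness in the \emph{higher-order} data norms $W^2_p\times W^{2-2/p}_p$, and the instantaneous smoothing you invoke (\cite[Theorem 5.1]{Pru02b}) is qualitative: it yields $v_t(\delta)\in X_\gamma$, not an estimate of $\|v_t(\delta)\|_{X_\gamma}$ in terms of $\|v_0\|_{X_\gamma}$; the solution map of \prettyref{thmLWP} controls $u_t(\delta)$ only in $W^{2-2/p}_p$. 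Without a quantitative smoothing estimate your nonlinear restart does not close. The paper avoids this entirely by exploiting linearity: $\bar w=v_t$ satisfies the \emph{linear} nonautonomous problem \eqref{EXS_Wes1}, $\bar w_t+\calA(0)\bar w=B(t)\bar w$, and the smallness used is that of the operator $B(t)$ on $(\delta,\infty)$ for $\delta$ \emph{large} — it comes from the decay of the already-fixed solution (the tail $\|v\|_{e^{-\omega}L_p(\delta,\infty;X_1)}\to0$), never from the size of $\bar w(\delta)$. A Neumann series then inverts $\partial_t+\calA(0)-B(\cdot)$ on $e^{-\omega}\0\bbE(\delta,\infty)$, using a translation argument to get a $\delta$-independent maximal regularity constant, and subtracting the semigroup orbit $e^{-\calA(0)(t-\delta^*)}v_t(\delta^*)$ reduces to zero initial data, so no smallness of the restart data is ever needed. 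Note that your own nonlinearity factors as $6k\,ww_t=6k\,u_t\cdot w_t$ with $u_t$ a known decaying coefficient; rewriting it this way recovers exactly the paper's linear structure and is the natural fix for your argument.
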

This result is designed as a counterpart to \cite[Theorem 1.3]{KaLa09}. There, the authors prove that $\|u_{tt}(t)\|_{L_2} \leq C e^{-\omega t}(\|u_0\|_{W_2^2}+\|u_1\|_{W_2^2})$ for all $t\geq 0$, which is not possible in our case, since only $u_1\in W^{2-2/p}_p(\Omega)$ is assumed. However, we are able to improve the qualitative behavior of $(u,u_t,u_{tt})$ with respect to convergence in the natural trace spaces.

This paper is organized as follows. In Section 2, we reformulate \eqref{eq:Westervelt} as a quasilinear evolution equation of the form
\begin{align*}
  v_t + \calA_\#(v) v + \calB(v) v = F(v) \text{ in $J$}, \quad v(0)=v_0,
\end{align*}
where we have introduced new functions $v=[u,u_t]^{\sf T}$, $v_0=[u_0,u_1]^{\sf T}$. For sufficiently small functions $v_*$, we prove that the linearized problem
\begin{align*}
  v_t + \calA_\#(v_*) v + \calB(v_*) v = f \text{ in $J$}, \quad v(0)=v_0,
\end{align*}
has the property of maximal $L_p$-regularity even in exponentially time-weighted $L_p$-spaces, see Theorem \ref{thm:MaxReg_calA}.

In Section 3 we prove \prettyref{thm:GLOBWP} by applying the implicit function theorem in exponentially time-weighted $L_p$-spaces.

Section 4 is devoted to the qualitative behavior of the solutions of \eqref{eq:Westervelt}. The exponential stability of the equilibrium $u=0$ is a direct consequence of the implicit function theorem. The higher regularity of the solution $u$ in Theorem \ref{thm:ExpDec} follows from the parabolicity of the problem, whereas the exponential decay of $u_{tt}$ can be seen by differentiating the equation for $u$ and by studying a corresponding linear evolution equation for $u_{tt}$.

\subsection{Notations}

Let $p\in (1,\infty)$, $J=[0,T]$, $T>0$, and let $X$ be a Banach space. Then $L_p(J;X)$ denotes the Banach space of all $p$-integrable functions with values in $X$ and $W_p^k(J;X)$ denotes the vector-valued Sobolev space of order $k\in\mathbb{N}$. We will frequently use the notation $_0W_p^1(J;X)$, where
$$_0W_p^1(J;X):=\{u\in W_p^1(J;X):u(0)=0\}.$$ 
If $X$ and $Y$ are Banach spaces, then $\calL(X,Y)$ denotes the space of all bounded linear operators from $X$ to $Y$.

\section{Maximal $L_p$-regularity for the linearized problem}

Let $J=[0,T]$ or $J=\bbR_+:=[0,\infty)$ and let $p\in(1,\infty)$. A closed linear operator $A:D(A)\to X$ with dense domain $D(A)$ in a Banach space $X$ is said to admit \emph{maximal $L_p$-regularity} on $J$, if for each $f\in L_p(J;X)$ the abstract Cauchy problem
\begin{align*}
  u_t + A u(t) = f(t), \quad t\in J, \quad u(0)=0
\end{align*}
admits a unique solution $u\in \bbE(J):=W^1_p(J;X) \cap L_p(J;D(A))$, cf. \cite{Ama95,DHP03,Pru02b}. Here $D(A)$ is a Banach space for the graph norm $\|\cdot\|_X+\|A\cdot\|_X$. The space $\bbE(J)$ is continuously embedded into the space $BUC(J;\tr\bbE)$ of bounded and uniformly continuous functions on $J$ with values in the trace space
\begin{align*}
   \tr\bbE = D_A(1-1/p,p) := (X,D(A))_{1-1/p,p},
\end{align*}
where $(\cdot,\cdot)_{\theta,p}$ indicates real interpolation. We refer to \cite{AdFo03,Ama95,Tri83} for more information on these function spaces.

We say that the abstract inhomogeneous Cauchy problem
\begin{align}\label{eq:AICP}
  u_t + A u(t) = f(t), \quad t\in J, \quad u(0)=u_0
\end{align}
admits maximal $L_p$-regularity, if the solution map
\begin{align*}
  (f,u_0)\mapsto u, \quad L_p(J;X) \times \tr\bbE \to \bbE(J)
\end{align*}
is a topological isomorphism. Its inverse $u\mapsto (f,u_0)$ will be denoted by $(\del_t+A,\tr_{t=0})$, where $\tr_{t=0} : u \mapsto u(0)$ is the trace map. If $A$ has maximal $L_p$-regularity on $J$, then also the abstract inhomogeneous Cauchy problem \eqref{eq:AICP} has maximal $L_p$-regularity on $J$, see \cite{Ama95}, Section III.1.5 and \cite{Pru02b}, Proposition 1.2. If \eqref{eq:AICP} has maximal $L_p$-regularity then the following a priori estimate is valid.
\begin{align}\label{eq:a_priori}
  \|u\|_{\bbE(J)} \leq M(J) \left( \|u_t+A u\|_{L_p(J;X)} + \|u(0)\|_{\tr\bbE} \right), \quad u\in\bbE(J).
\end{align}
Here $M(J)<\infty$ is called a maximal $L_p$-regularity constant of $A$. It is independent of $J$ in the following sense. For any $T_0\in(0,\infty)$ there is $M_0<\infty$ such that \eqref{eq:a_priori} holds with $M(J)=M_0$ for any interval $J=[0,T]\subset[0,T_0]$. If $A$ has maximal $L_p$-regularity on $\bbR_+$ we may also take $T_0=\infty$.

Let us now reformulate problem \eqref{eq:Westervelt} as a quasilinear evolution equation for the new function $v=[u,u_t]^{\sf T}$. Taking into account that $(u^2)_{tt}=2u_{tt}u+2u_t^2$ we rewrite the first equation in \eqref{eq:Westervelt}
as
\begin{align*}
  v_t+ \begin{bmatrix} 0 & -I \\ 0 & -\frac{b}{1-2kv_1}\Delta \end{bmatrix} v + \begin{bmatrix} 0 & 0 \\ -\frac{c^2}{1-2kv_1}\Delta & 0 \end{bmatrix} v &= \begin{bmatrix}0\\\frac{2v_2^2}{1-2kv_1}\end{bmatrix},
\end{align*}
where $v=[v_1,v_2]^{\sf T}$. We now define
\begin{align*}
  \calA_\#(v):=\begin{bmatrix} 0 & -I \\ 0 & -\frac{b\Delta_D}{1-2kv_1}\end{bmatrix}, \quad \calB(v):=\begin{bmatrix} 0 & 0 \\ -\frac{c^2\Delta_D}{1-2kv_1} & 0 \end{bmatrix}, \quad F(v) := \begin{bmatrix}0\\\frac{2v_2^2}{1-2kv_1}\end{bmatrix},
\end{align*}
where $\Delta_D$ stands for the Laplace operator with homogeneous Dirichlet boundary conditions. This turns \eqref{eq:Westervelt} into the quasilinear initial value problem
\begin{align}\label{eq:Westervelt_2}
  v_t + \calA_\#(v) v + \calB(v) v = F(v) \text{ in $J$}, \quad v(0)=v_0.
\end{align}
To ensure that the problem \eqref{eq:Westervelt_2} is parabolic, we concentrate on solutions $v$ which are real-valued and bounded in the sense that
\begin{align}\label{eq:Bound}
  \sup \{ |v_1(t,x)| : t > 0, x\in\Omega \} < 1/(2k).
\end{align}
In the remaining part of this section we investigate the linear problem
\begin{align}\label{eq:Westervelt_3}
  v_t + \calA_\#(v_*) v + \calB(v_*) v = f \text{ in $J$}, \quad v(0)=v_0,
\end{align}
where $v_*$ is a fixed function satisfying \eqref{eq:Bound}, $v:J\times\Omega\to\bbR^2$ is the unknown function, $f:J\times\Omega\to\bbR^2$ and $v_0:\Omega\to\bbR^2$ are given functions and
\begin{align*}
  \calA_\#(v_*) &= \begin{bmatrix} 0 & -I \\ 0 & bA(v_*)\end{bmatrix}, \quad \calB(v_*) = \begin{bmatrix} 0 & 0 \\ c^2A(v_*) & 0 \end{bmatrix}, \\
  A(v_*)u &:= -(1-2k (v_*)_1)^{-1}\Delta_D u.
\end{align*}
To shorten the notation we will sometimes drop the dependence with respect to $v_*$. It is well-known that $A=A(v_*)$ has maximal $L_q$-regularity in $L_p(\Omega)$, see for instance \cite{DHP03,Dor93,HiPr97,KuWe04,LSU68}. We state this as 
\begin{thm}\label{thm:MaxReg_A}
  Let $p,q\in (1,\infty)$, $\Omega$ be a bounded $C^2$-domain in $\bbR^n$, $a\in C(\bar\Omega)$, $a(x)\geq a_0 >0$, ($x\in\Omega$). Then the operator $Au(x)=-a(x)\Delta u(x)$ with domain $D(A)=\{u\in W^2_p(\Omega) : u|_{\del\Omega}=0\}$ in $L_p(\Omega)$, admits maximal $L_q$-regularity on $\bbR_+$ for each $q\in(1,\infty)$.
\end{thm}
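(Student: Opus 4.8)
The plan is to prove the equivalent statement that $A$ is $\calR$-sectorial on $L_p(\Omega)$ with $\calR$-angle strictly less than $\pi/2$ and that $0\in\rho(A)$. Since $L_p(\Omega)$ is a UMD space for $p\in(1,\infty)$, the theorem of Weis characterizing maximal regularity in UMD spaces then yields maximal $L_q$-regularity on $\bbR_+$ simultaneously for every $q\in(1,\infty)$; see \cite{DHP03,KuWe04}. Invertibility is immediate: the Dirichlet Laplacian $\Delta_D$ is boundedly invertible on $L_p(\Omega)$ by the Poincar\'e inequality, and since $a_0\le a\le\|a\|_\infty$ the multiplication by $a^{-1}$ is an isomorphism of $L_p(\Omega)$, so $A=a(-\Delta_D)$ has $0$ in its resolvent set. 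It therefore remains to establish the $\calR$-boundedness of $\{\lambda(\lambda+A)^{-1}:|\arg\lambda|\le\pi-\phi\}$ for some $\phi<\pi/2$.

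First I would treat the two constant-coefficient model problems. On $\bbR^n$ the operator $-a_*\Delta$ with $a_*>0$ has resolvent given by the Fourier symbol $(\lambda+a_*|\xi|^2)^{-1}$; a direct verification of the Mihlin-type conditions, together with the operator-valued multiplier theorem, shows that it is $\calR$-sectorial with angle $0$. On the half-space $\bbR^n_+$ with homogeneous Dirichlet condition the same conclusion follows by odd reflection, reducing the problem to the whole-space case. These are exactly the model operators whose $\calR$-sectoriality is classical, cf. \cite{DHP03}.

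Next I would localize. Since $a\in C(\bar\Omega)$ and $\bar\Omega$ is compact, $a$ is uniformly continuous, so for any prescribed $\eta>0$ there is a finite open cover $\bar\Omega\subset\bigcup_j U_j$ on each patch of which $\operatorname{osc}_{U_j}a<\eta$. Using that $\del\Omega$ is $C^2$, I would flatten the boundary on each boundary patch by a $C^2$-diffeomorphism; this turns $-a\Delta$ into a second-order operator with continuous coefficients, and after freezing these coefficients at $x_j$ and a linear change of variables one arrives at the constant-coefficient half-space Dirichlet Laplacian treated above, while interior patches reduce directly to the whole-space model. Fixing cutoffs $\varphi_j,\psi_j$ subordinate to the cover with $\psi_j\equiv1$ on $\supp\varphi_j$ and $\sum_j\varphi_j\equiv1$ on $\bar\Omega$, and writing $A_j$ for the frozen model operator associated with $U_j$, I would set
\begin{align*}
  R(\lambda):=\sum_j\varphi_j(\lambda+A_j)^{-1}\psi_j.
\end{align*}
A short computation gives $(\lambda+A)R(\lambda)=I+S(\lambda)$, where $S(\lambda)$ collects two types of error terms: the commutators $[A,\varphi_j](\lambda+A_j)^{-1}\psi_j$, which are of first order, and the frozen-coefficient defects $\varphi_j\bigl(a(x_j)-a\bigr)\Delta(\lambda+A_j)^{-1}\psi_j$, which are of second order but carry the small factor $a(x_j)-a$.

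The main obstacle is to make $S(\lambda)$ small in $\calR$-bound, not merely in norm, and it is here that the mere \emph{continuity} of $a$ enters. The commutator terms are relatively lower-order: applied after $(\lambda+A_j)^{-1}$ they carry a factor $|\lambda|^{-1/2}$, so their contribution is small once $|\lambda|$ is large. The genuinely delicate terms are the second-order defects; since $\Delta(\lambda+A_j)^{-1}=-a(x_j)^{-1}A_j(\lambda+A_j)^{-1}$ does \emph{not} gain any power of $|\lambda|$, these cannot be absorbed by enlarging $|\lambda|$. Instead one uses $\|a(x_j)-a\|_{\infty}<\eta$ on $U_j$ together with the uniform $\calR$-bound of $\{\Delta(\lambda+A_j)^{-1}\}$ on the sector, obtaining for these terms an $\calR$-bound proportional to $\eta$, which is made $<1/2$ by choosing the cover fine enough. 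Hence $S(\lambda)$ has $\calR$-bound $<1$ for all $\lambda$ in the sector with $|\lambda|$ large, a Neumann series yields $(\lambda+A)^{-1}=R(\lambda)(I+S(\lambda))^{-1}$, and the $\calR$-sectoriality of the models is transported to $A$ with $\calR$-angle $<\pi/2$. Combined with the invertibility established above, this gives $\calR$-sectoriality on a full sector and hence maximal $L_q$-regularity on $\bbR_+$ for every $q\in(1,\infty)$.
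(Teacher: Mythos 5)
The paper itself offers no proof of this theorem: it is stated as well known, with citations to \cite{DHP03,Dor93,HiPr97,KuWe04,LSU68}, so there is no in-paper argument to compare against. What you have written is, correctly, a reconstruction of the standard proof behind those citations, essentially that of Denk--Hieber--Pr\"uss \cite{DHP03}: Weis's characterization of maximal regularity in UMD spaces, $\calR$-sectoriality of the constant-coefficient whole-space and half-space models (the half-space Dirichlet case by odd reflection), and a localization--freezing parametrix whose error is small in $\calR$-bound because the uniform continuity of $a$ controls the second-order defects while the first-order commutators gain $|\lambda|^{-1/2}$. Your reduction of invertibility to the Poincar\'e inequality is also the right way to pass from maximal regularity on finite intervals to $\bbR_+$ with the full norm $\|u\|_{L_p(\bbR_+;D(A))}$.

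Two sketch-level points deserve to be made explicit if this were written out. First, as the cover is refined the number $N$ of patches grows like $\eta^{-n}$, so estimating $S(\lambda)$ by the triangle inequality gives an $\calR$-bound of order $N\eta$, which does not tend to zero; the standard fix is to exploit the bounded overlap of the supports of the $\varphi_j,\psi_j$ via an $\ell_p$ direct-sum (retraction--coretraction) argument, which yields a constant independent of $N$. Second, your Neumann series only produces $\calR$-boundedness of $\{\lambda(\lambda+A)^{-1}\}$ on the sector for $|\lambda|\geq R$, and $0\in\rho(A)$ only covers a neighborhood of the origin; to conclude $\calR$-sectoriality on the full sector you must rule out spectrum in the compact intermediate region. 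For this operator that follows from testing $-\Delta u=\lambda u/a$ against $\bar u$, which shows $\sigma(A)\subset[\lambda_1(A),\infty)$ --- precisely the computation the paper carries out in Section 2 after stating the theorem --- and then $\calR$-boundedness over the remaining compact set of resolvent points follows from norm-continuity of the resolvent. With these two standard insertions your argument is complete and coincides with the proof in the literature on which the paper relies.
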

We conclude that $A$, $bA$, $c^2A$ have maximal $L_q$-regularity on $\bbR_+$ in $L_p(\Omega)$ for any $p,q\in(1,\infty)$. Hence also $A_\alpha:=\alpha+bA$ has this property for any $\alpha>0$. Consider $\calA_\#$, $\calB$ as operators in the Banach space $X_0$ with domain $X_1$, defined by
\begin{align*}
  X_0 := D(A)\times L_p(\Omega), \quad X_1 := D(A)\times D(A).
\end{align*}
Then it follows that $\calA_\#\in\calL(X_1,X_0)$ and $\calB\in\calL(X_0,X_0)$. The next goal is to show that
\begin{align*}
  \calA := \calA_\#+\calB = \begin{bmatrix}
    0 & -I \\ c^2 A & b A
  \end{bmatrix}.
\end{align*}
with domain $X_1$ in the space $X_0$ has maximal $L_q$-regularity on $\bbR_+$. To this purpose we first show in \prettyref{thm:A_principal_R-sectorial} that $\mu+\calA$ has maximal $L_q$-regularity on $\mathbb{R}_+$ in $X_0$ for some $\mu>0$. Then we use properties of the spectrum $\sigma(\calA)$ to prove that we may take $\mu=0$.

\begin{thm}\label{thm:A_principal_R-sectorial}
  Let $\alpha>0$ and $q\in (1,\infty)$. Then $\calA_{\#,\alpha}=\alpha +\calA_\# :X_1\to X_0$ has maximal $L_q$-regularity on $\bbR_+$ and there exists $\mu>0$ such that $\mu+\calA=\mu+\calA_{\#}+\calB$ enjoys the same property.
\end{thm}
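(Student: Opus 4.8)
The plan is to treat the two assertions separately, using that $\alpha+\calA_\#$ is upper triangular while $\calB$ is a genuinely bounded perturbation of it.

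For the first assertion I would solve the abstract Cauchy problem $v_t+(\alpha+\calA_\#)v=f$, $v(0)=0$, componentwise for $v=[v_1,v_2]^{\sf T}$ and $f=[f_1,f_2]^{\sf T}\in L_q(\bbR_+;X_0)$. Since
\begin{align*}
  \alpha+\calA_\#=\begin{bmatrix}\alpha I & -I\\ 0 & \alpha+bA\end{bmatrix}
\end{align*}
is triangular, the second component decouples and $v_2$ solves $(v_2)_t+(\alpha+bA)v_2=f_2$ in $L_p(\Omega)$. As $bA$, and hence the invertible operator $\alpha+bA$, has maximal $L_q$-regularity on $\bbR_+$ (\prettyref{thm:MaxReg_A}), there is a unique $v_2\in W^1_q(\bbR_+;L_p(\Omega))\cap L_q(\bbR_+;D(A))$. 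Feeding this back, $v_1$ solves the scalar evolution $(v_1)_t+\alpha v_1=f_1+v_2$ in $D(A)$, whose right-hand side lies in $L_q(\bbR_+;D(A))$ because both $f_1$ and $v_2$ do. Here the operator is the bounded multiplication $\alpha I$ on $D(A)$, so the variation-of-constants formula together with Young's inequality (convolution against $e^{-\alpha\,\cdot}\in L_1(\bbR_+)$) yields $v_1\in W^1_q(\bbR_+;D(A))$. Hence $v\in\bbE(\bbR_+)$ and is unique, which is exactly maximal $L_q$-regularity of $\alpha+\calA_\#$.

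For the second assertion I would pass to the $R$-sectoriality characterization of maximal regularity. First observe that $X_0=D(A)\times L_p(\Omega)$ is a UMD space: $L_p(\Omega)$ is UMD for $1<p<\infty$, and $D(A)$ with its graph norm is isomorphic to $L_p(\Omega)$ via the invertible operator $A$, hence also UMD, and finite products of UMD spaces are UMD. By the theorem of Weis (see \cite{KuWe04,DHP03}), the maximal $L_q$-regularity of $\alpha+\calA_\#$ just established is equivalent to $\alpha+\calA_\#$ being $R$-sectorial on $X_0$ with $R$-angle strictly less than $\pi/2$. The decisive structural fact is that $\calB\in\calL(X_0,X_0)$, so $\calB$ is a perturbation of $\alpha+\calA_\#$ with relative bound zero.

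The substance of the proof is then the perturbation step, which I expect to be the only delicate point. I would invoke the stability of $R$-sectoriality under relative-bound-zero perturbations: for $\lambda$ in a suitable sector with $|\lambda|$ large one writes $\lambda+\calA_\#+\calB=(\lambda+\calA_\#)\bigl(I+(\lambda+\calA_\#)^{-1}\calB\bigr)$, and since $\{\lambda(\lambda+\calA_\#)^{-1}\}$ is $R$-bounded and $\calB$ is a fixed bounded operator, the family $\{(\lambda+\calA_\#)^{-1}\calB\}$ has $R$-bound at most $(\text{$R$-bd})\,\|\calB\|/|\lambda|<\tfrac12$ once $|\lambda|$ is large enough. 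A Neumann series in the algebra of $R$-bounded families then shows that $\{(I+(\lambda+\calA_\#)^{-1}\calB)^{-1}\}$ is $R$-bounded, and therefore so is $\{\lambda(\lambda+\calA_\#+\calB)^{-1}\}$ on the large-modulus part of the sector; a sufficiently large shift $\mu\ge\mu_0$ absorbs the small-modulus part and simultaneously guarantees invertibility. Absorbing $\alpha$ into the shift, this yields that $\mu+\calA=\mu+\calA_\#+\calB$ is $R$-sectorial of angle $<\pi/2$ and invertible, hence has maximal $L_q$-regularity on $\bbR_+$. The triangular solvability of part one and the UMD property are routine; controlling the $R$-angle uniformly through the shift, rather than merely obtaining regularity on a finite interval, is the point that requires care.
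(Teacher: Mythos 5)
Your proposal is correct and follows essentially the same route as the paper: the first assertion is proved by the identical triangular componentwise solve (maximal regularity of $\alpha+bA$ for $v_2$, then convolution with $e^{-\alpha\,\cdot}$ for $v_1$), and the second assertion is the same bounded-perturbation argument, except that where the paper simply cites Proposition 4.3 and Theorem 4.4 of \cite{DHP03}, you unpack their proof (Weis's $R$-sectoriality characterization on the UMD space $X_0$ plus the Neumann-series perturbation with a large shift $\mu$). That unpacking is accurate, so there is no gap --- it is the cited result made explicit rather than a different method.
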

\begin{proof}
Consider the system
\begin{equation}\label{eq:MR1}
v_t+\calA_{\#,\alpha}v=f,\quad v(0)=0\Blue{.}
\end{equation}
We have to show that for each $f\in L_q(\bbR_+;X_0)$ there exists a unique solution
$$v\in\!_0W_q^1(\bbR_+;X_0)\cap L_q(\bbR_+;X_1).$$
If $f=(f_1,f_2)$ and $v=(v_1,v_2)$, then \eqref{eq:MR1} is equivalent to
\begin{align*}\label{eq:MR2}
\partial_t v_1+\alpha v_1-v_2&=f_1,\\
\partial_t v_2+A_\alpha v_2&=f_2,
\end{align*}
subject to the initial conditions $v_j(0)=0$. Since $A_\alpha$ has maximal $L_q$-regularity on $\bbR_+$, we may solve the second equation to obtain a unique solution
$$v_2\in \!_0W_q^1(\bbR_+;L_p(\Omega))\cap L_q(\bbR_+;D(A)).$$
Inserting $v_2$ into the first equation we obtain the initial value problem
\begin{equation}\label{eq:MR3}
\partial_tv_1+\alpha v_1=g,\quad v_1(0)=0,
\end{equation}
where $g:=v_2+f_1\in L_q(\bbR_+;D(A))$. It is evident that for $\alpha>0$ the operator $(\partial_t+\alpha)$ with domain $D(\partial_t+\alpha)=\!_0W_q^1(\bbR_+;D(A))$ is invertible in $L_q(\bbR_+;D(A))$ and the unique solution of \eqref{eq:MR3} is given by
$$v_1(t)=\int_0^t e^{-\alpha(t-s)}g(s) ds,\quad t\ge 0.$$
This shows that $\calA_{\#,\alpha}$ has maximal $L_q$-regularity on $\bbR_+$.

We finally prove the second assertion by a perturbation argument (Proposition 4.3 and Theorem 4.4 in \cite{DHP03}): Since $(\calB-\alpha):X_0\to X_0$ is a bounded linear operator and $\alpha+\calA_\#$ has maximal $L_q$-regularity on $\bbR_+$, there exists $\mu>0$ such that the operator $\mu+\calA=\mu+(\alpha+\calA_\#)+(\calB-\alpha)$ has maximal $L_q$-regularity on $\bbR_+$.
\end{proof}

Next, we analyze the spectrum $\sigma(\calA)$. Recall that $A$ is defined by 
$$(Au)(x):=-a(x)\Delta u(x)\quad \text{for}\quad u\in D(A)=\{u\in W^2_p(\Omega) : u|_{\del\Omega}=0\}$$
and some $a\in C(\bar{\Omega})$ with $a(x)\geq a_0 >0$ for all $x\in\Omega$. Since $A$ has compact resolvent, the spectrum $\sigma(A)$ is a discrete subset of $\mathbb{C}$ and it consists solely of eigenvalues with finite multiplicity. Let $\lambda\in\sigma(A)$ and $u\in D(A)$ be a corresponding eigenfunction. Then
$$(Au)(x)=\lambda u(x)\Leftrightarrow -a(x)\Delta u(x)=\lambda u(x)\Leftrightarrow -\Delta u(x)=\frac{\lambda}{a(x)}u(x).$$
Testing the last equation with $\bar{u}$ and integrating by parts ($u|_{\partial\Omega}=0$) it follows that $\lambda$ is real and $\lambda>0$ by the Poincar\'{e} inequality. We define
$$\lambda_1(A):=\min\{\lambda:\lambda\in\sigma(A)\}>0,$$
hence $\sigma(A)\subset[\lambda_1(A),\infty)$. In particular, $A:D(A)\to L_p(\Omega)$ is an isomorphism. Suppose that $\lambda$ belongs to the resolvent set $\rho(\calA)$ of $\calA$. Then the following resolvent formula holds.
\begin{align*}
  (\lambda-\calA)^{-1} &= \begin{bmatrix}
    - R_\lambda (\lambda - b A) & R_\lambda \\
    I + \lambda R_\lambda (\lambda - bA) & -\lambda R_\lambda
  \end{bmatrix}.
\end{align*}
Here $R_\lambda$ is defined by $R_\lambda := (-\lambda^2 I + (\lambda b - c^2) A)^{-1}$. If $\lambda b \neq c^2$ then
\begin{align*}
  R_\lambda = - \frac{1}{\lambda b - c^2} \left( \frac{\lambda^2}{\lambda b - c^2} I - A \right)^{-1}.
\end{align*}
This useful identity yields the following characterization of $\rho(\calA)$.

\begin{lem}\label{lem:Resolvent}
  $\lambda\in\rho(\calA)$ if and only if $\lambda b - c^2 \neq 0$ and $\mu(\lambda):=\frac{\lambda^2}{\lambda b - c^2} \in \rho(A)$.
\end{lem}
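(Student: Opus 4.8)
The plan is to reduce the resolvent equation $(\lambda-\calA)v=w$ to a single equation for the first component by block elimination (a Schur-complement computation) and then to read off bijectivity from the scalar-operator factor. Writing $v=(v_1,v_2)$ and $w=(w_1,w_2)\in X_0=D(A)\times L_p(\Omega)$, the equation $(\lambda-\calA)v=w$ is the pair
\begin{align*}
  \lambda v_1 + v_2 &= w_1,\\
  -c^2 A v_1 + (\lambda - bA)v_2 &= w_2.
\end{align*}
Solving the first line for $v_2 = w_1 - \lambda v_1$ and substituting into the second, I obtain
\begin{align*}
  S_\lambda v_1 := \bigl((\lambda b - c^2)A - \lambda^2 I\bigr)v_1 = w_2 - (\lambda - bA)w_1 =: g,
\end{align*}
where $g\in L_p(\Omega)$ because $w_1\in D(A)$ and $w_2\in L_p(\Omega)$. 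Thus $\lambda\in\rho(\calA)$ reduces to the statement that $S_\lambda$ is a bijection from $D(A)$ onto $L_p(\Omega)$ with bounded inverse; the companion requirement $v_2=w_1-\lambda v_1\in D(A)$ then holds automatically, since $w_1\in D(A)$ and $v_1\in D(A)$.

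For the implication ``$\Leftarrow$'', suppose $\lambda b - c^2\neq 0$ and $\mu(\lambda)\in\rho(A)$. Then $S_\lambda=(\lambda b - c^2)\bigl(A-\mu(\lambda)I\bigr)$ is a nonzero scalar times the isomorphism $A-\mu(\lambda)I:D(A)\to L_p(\Omega)$, so $R_\lambda=S_\lambda^{-1}:L_p(\Omega)\to D(A)$ is bounded. Reconstructing $v_1=R_\lambda g\in D(A)$ and $v_2=w_1-\lambda v_1\in D(A)$ shows that $\lambda-\calA$ is a bijection $X_1\to X_0$, and the resolvent formula displayed before the lemma exhibits the inverse explicitly as a matrix of bounded operators $X_0\to X_1$; here one need only track that $R_\lambda$ maps $L_p(\Omega)$ into $D(A)$ and that $(\lambda-bA)$ maps the $D(A)$-factor of $X_0$ into $L_p(\Omega)$, so every entry is well defined. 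Hence $\lambda\in\rho(\calA)$.

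For the converse I argue by contraposition in two cases. If $\lambda b - c^2=0$, then since $b,c>0$ we have $\lambda=c^2/b\neq 0$, so $S_\lambda=-\lambda^2 I$ and the reduced equation forces $v_1=-\lambda^{-2}g$. Choosing $w_1=0$ and any $w_2\in L_p(\Omega)\setminus D(A)$ (possible because $D(A)$ is a proper dense subspace of $L_p(\Omega)$) gives $g=w_2\notin D(A)$, so no solution $v_1\in D(A)$ exists and $\lambda-\calA$ fails to be surjective; thus $\lambda\notin\rho(\calA)$. If instead $\lambda b - c^2\neq 0$ but $\mu(\lambda)\notin\rho(A)$, then $\mu(\lambda)\in\sigma(A)$ is an eigenvalue (recall $A$ has compact resolvent), so there is $\phi\in D(A)\setminus\{0\}$ with $(A-\mu(\lambda)I)\phi=0$. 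A direct check using $A\phi=\mu(\lambda)\phi$ shows $v:=(\phi,-\lambda\phi)\in X_1$ lies in $\ker(\lambda-\calA)$, so $\lambda-\calA$ is not injective and again $\lambda\notin\rho(\calA)$.

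The main obstacle — and the only genuinely delicate point — is the asymmetry of the ground space $X_0=D(A)\times L_p(\Omega)$, whose first factor carries the graph norm rather than the $L_p$-norm. This is exactly what kills the degenerate case $\lambda b=c^2$: there $S_\lambda$ loses its second-order part and can no longer recover the derivatives needed to return the first component to $D(A)$, so surjectivity fails even though $-\lambda^2 I$ is trivially invertible on $L_p(\Omega)$. Throughout the verification one must keep this regularity bookkeeping straight, checking in each matrix entry that a loss of two derivatives under $A$ is always compensated by a gain of two derivatives under $R_\lambda$ before any $D(A)$-valued output is claimed.
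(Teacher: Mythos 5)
Your argument is correct, and its sufficiency half is essentially the paper's: your Schur complement $S_\lambda=(\lambda b-c^2)A-\lambda^2 I$ is precisely the operator whose inverse $R_\lambda$ appears in the resolvent formula displayed before the lemma, and your block elimination in fact \emph{derives} that formula, which the paper merely asserts. The necessity half is where you take a genuinely different route. The paper argues from the formula itself: if $\lambda\in\rho(\calA)$ then $R_\lambda$ must be bounded from $L_p(\Omega)$ into $D(A)$; this is impossible when $\lambda b-c^2=0$, since $R_\lambda$ would then be a scalar multiple of the identity, and when $\lambda b-c^2\neq 0$ it yields $\mu(\lambda)\in\rho(A)$ at once. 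You instead contrapose with explicit witnesses: a right-hand side $(0,w_2)$ with $w_2\in L_p(\Omega)\setminus D(A)$ that destroys surjectivity in the degenerate case, and the kernel vector $(\phi,-\lambda\phi)$ built from an eigenfunction when $\mu(\lambda)\in\sigma(A)$ --- a computation that checks out, since $\mu(\lambda)(\lambda b-c^2)=\lambda^2$. Your version is more self-contained and makes explicit the regularity bookkeeping forced by the asymmetric ground space $X_0=D(A)\times L_p(\Omega)$, which the paper compresses into ``must be bounded'' and ``we can easily check''. The one extra ingredient you use is that $\sigma(A)$ consists of eigenvalues, i.e.\ the compact resolvent of $A$; the paper records this just before the lemma, so the appeal is legitimate here, but note that the paper's boundedness argument (or a surjectivity-based reading of your own reduction: taking $w_1=0$ transfers bijectivity of $\lambda-\calA$ to bijectivity of $S_\lambda:D(A)\to L_p(\Omega)$, whence $\mu(\lambda)\in\rho(A)$ by the closed graph theorem) would cover operators whose spectrum contains non-eigenvalues, where your injectivity construction alone would not suffice.
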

\begin{proof}
  Let $\lambda\in\rho(\calA)$. Then $R_\lambda:L_p(\Omega)\to D(A)$ must be bounded. Suppose that $\lambda b - c^2 = 0$. Then $(-\lambda^2 I)^{-1} : L_p(\Omega) \to D(A)$ would be bounded, but that is not possible. Therefore we must have $\lambda b - c^2 \neq 0$ and this implies $\frac{\lambda^2}{\lambda b - c^2} \in \rho(A)$.

  Conversely, let $\lambda b - c^2 \neq 0$ and $\frac{\lambda^2}{\lambda b - c^2} \in \rho(A)$. Then $R_\lambda: L_p(\Omega)\to D(A)$ is bounded and we can easily check that $(\lambda-\calA)^{-1} : X_0\to X_1$ is bounded.
\end{proof}

\begin{lem}\label{lem:spectral_bound}
  If $\Re\lambda<\lambda_0:=\min\{\frac b 2 \lambda_1(A), \frac{c^2}b\}$ then $\lambda\in\rho(\calA)$. 
\end{lem}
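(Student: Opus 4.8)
The plan is to invoke \prettyref{lem:Resolvent}, which reduces the assertion $\lambda\in\rho(\calA)$ to the two conditions $\lambda b-c^2\neq 0$ and $\mu(\lambda)=\frac{\lambda^2}{\lambda b-c^2}\in\rho(A)$. The first condition is immediate from the hypothesis: since $\Re\lambda<\lambda_0\leq c^2/b$, the real part of $\lambda b-c^2$ equals $b\,\Re\lambda-c^2<0$, so in particular $\lambda b-c^2\neq 0$ and $\mu(\lambda)$ is well defined.

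For the second condition I would use the spectral information already established, namely $\sigma(A)\subset[\lambda_1(A),\infty)$, so that it suffices to show $\mu(\lambda)\notin[\lambda_1(A),\infty)$. I argue by contradiction: suppose $\mu:=\mu(\lambda)$ is real with $\mu\geq\lambda_1(A)>0$. The defining relation $\mu(\lambda b-c^2)=\lambda^2$ then shows that $\lambda$ is a root of the real quadratic $z^2-\mu b\,z+\mu c^2=0$, whose two roots have sum $\mu b>0$ and product $\mu c^2>0$.

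The heart of the matter is to bound $\Re\lambda$ from below in each of the two possible cases and to observe that the two terms defining $\lambda_0=\min\{\frac b2\lambda_1(A),\frac{c^2}b\}$ are exactly tailored to them. If the roots form a complex-conjugate pair, then $\Re\lambda$ is half their sum, so $\Re\lambda=\frac{\mu b}{2}\geq\frac b2\lambda_1(A)\geq\lambda_0$. If instead the roots are real, then both are positive (their sum and product are positive); denoting by $\lambda'$ the second root, the relations $\lambda+\lambda'=\mu b$ and $\lambda\lambda'=\mu c^2$ give $\lambda'<\mu b$ and hence $\lambda=\mu c^2/\lambda'>c^2/b\geq\lambda_0$ (the degenerate double root being covered by the same computation). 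In either case $\Re\lambda\geq\lambda_0$, contradicting the hypothesis $\Re\lambda<\lambda_0$. Therefore $\mu(\lambda)\in\rho(A)$, and \prettyref{lem:Resolvent} yields $\lambda\in\rho(\calA)$.

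I expect the only genuine obstacle to be the case distinction in the last paragraph: one must recognize that the reality of $\mu(\lambda)$ forces $\lambda$ onto the root locus of a real quadratic with positive sum and product, and then check that the complex-root branch is controlled by $\frac b2\lambda_1(A)$ while the real-root branch is controlled by $\frac{c^2}b$. Everything else—verifying $\lambda b-c^2\neq 0$ and translating between $\rho(A)$ and $\rho(\calA)$—is routine.
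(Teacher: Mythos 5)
Your proof is correct and follows essentially the same route as the paper: reduce to \prettyref{lem:Resolvent}, use $\sigma(A)\subset[\lambda_1(A),\infty)$, and split according to whether the quadratic $z^2-\mu b\,z+\mu c^2$ has complex-conjugate or real roots, with the two branches controlled by $\frac b2\lambda_1(A)$ and $\frac{c^2}b$ respectively. The only differences are cosmetic: you bound the real-root case via Vieta's formulas where the paper rationalizes the explicit root expression $\frac{ab}2\bigl(1-\sqrt{1-\frac{4c^2}{ab^2}}\bigr)$, and you explicitly verify $\lambda b-c^2\neq 0$, a hypothesis of \prettyref{lem:Resolvent} that the paper leaves implicit.
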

\begin{proof}
  We prove the assertion by contraposition. Let $a\geq\lambda_1(A)$ and let $\lambda\in\bbC$ satisfy $a=\mu(\lambda)=\lambda^2/(\lambda b - c^2)$. Then $\lambda$ has the form $\frac{ab}2 \pm \sqrt{\frac{a^2b^2}4-ac^2}$. In the case $\frac{a^2b^2}4-ac^2\leq 0$ we obtain $\Re\lambda=\frac{ab}2\geq\frac b 2 \lambda_1(A)$. In the other case $\frac{a^2b^2}4-ac^2\geq 0$ we obtain that $\lambda$ is real and  \begin{align*}
    \lambda \geq \frac{ab}2\left(1-\sqrt{1-\frac{4c^2}{ab^2}}\right) = \frac{ab}2 \frac {1-\left(1-\frac{4c^2}{ab^2}\right)} {1+\sqrt{1-\frac{4c^2}{ab^2}}} = \frac {\frac{2c^2}b} {1+\sqrt{1-\frac{4c^2}{ab^2}}} \geq \frac {c^2}b.
  \end{align*}
  We have shown that $\mu(\lambda)\geq\lambda_1(A)$ implies $\Re\lambda\geq \lambda_0$. Consequently, if $\Re\lambda < \lambda_0$, then $\mu(\lambda)\in\rho(A)$ since $\sigma(A)\subset[\lambda_1(A),\infty)$. By \prettyref{lem:Resolvent} this implies $\lambda\in\rho(\calA)$.

\end{proof}

Having analyzed the spectrum for the operator $\calA$, we can continue to prove maximal $L_p$-regularity for this operator on $\mathbb{R}_+$. To this end, we consider again the linear problem \eqref{eq:Westervelt_3}. Concerning regularity of initial data $v_0$ we consider the space
\begin{align*}
  X_\gamma \Blue{:}= \left( X_0,X_1\right)_{1-1/p,p} &= D(A) \times D_A(1-1/p,p).
\end{align*}
Note that the real interpolation space $D_A(1-1/p,p)$ can be computed to the result
$$D_A(1-1/p,p)=\begin{cases}W_p^{2-2/p}(\Omega)&,\ \text{if}\ p<3/2,\\
  					\{u\in W_p^{2-2/p}(\Omega): u|_{\partial\Omega}=0\}&,\ \text{if}\ p>3/2,\end{cases}$$
see e.g.\ \cite{Gri69}. In case $p=3/2$ the space $D_A(1-1/p,p)$ looks more complicated.

Let $Y$, $\bbX(J)$ be Banach spaces such that $\bbX(J)\hookrightarrow L_{1,\text{loc}}(J;Y)$ and let $\omega\in\bbR$. To describe exponential decay of solutions we employ the Banach space
\begin{align*}
  e^{-\omega}\bbX(J) := \{u\in L_{1,\text{loc}}(J;Y): [t\mapsto e^{\omega t}u(t)] \in \bbX(J)\},
\end{align*}
equipped with the norm $\|u\|_{e^{-\omega}\bbX(J)} := \|[t\mapsto e^{\omega t}u(t)]\|_{\bbX(J)}$.

\begin{thm}\label{thm:MaxReg_calA}
  Let $p\in (1,\infty)$, $-\lambda_0 = \sup\{\Re\lambda : \lambda\in\sigma(-\calA)\}<0$ denote the spectral bound of $-\calA$ and let $\omega\in[0,\lambda_0)$. Then $\calA$ has maximal $L_p$-regularity on $\bbR_+$ in the sense that
   \begin{align*}
     (\del_t+\calA,\tr_{t=0}) : e^{-\omega}(W^1_p(\bbR_+;X_0) \cap L_p(\bbR_+;X_1)) \to e^{-\omega}L_p(\bbR_+;X_0) \times X_\gamma
   \end{align*}
   is an isomorphism.
\end{thm}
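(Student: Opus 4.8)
The plan is to strip off the exponential weight by a substitution, reducing the assertion to an unweighted maximal-regularity statement for the shifted operator $\calA-\omega$, and then to obtain that statement by combining the maximal regularity of $\mu+\calA$ from \prettyref{thm:A_principal_R-sectorial} with the spectral information of \prettyref{lem:spectral_bound}. First I would treat the weight: for $u$ in the solution space set $w(t):=e^{\omega t}u(t)$. A direct computation gives $\del_t u+\calA u=f$, $u(0)=u_0$ if and only if $\del_t w+(\calA-\omega)w=e^{\omega\cdot}f$, $w(0)=u_0$, and the map $u\mapsto e^{\omega\cdot}u$ is by definition an isometric isomorphism from $e^{-\omega}(W^1_p(\bbR_+;X_0)\cap L_p(\bbR_+;X_1))$ onto $W^1_p(\bbR_+;X_0)\cap L_p(\bbR_+;X_1)$, carries $e^{-\omega}L_p(\bbR_+;X_0)$ onto $L_p(\bbR_+;X_0)$, and fixes the trace at $t=0$. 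Hence $(\del_t+\calA,\tr_{t=0})$ is the claimed isomorphism precisely when $(\del_t+(\calA-\omega),\tr_{t=0})$ is one in the unweighted spaces, i.e. when $\calA-\omega$ has maximal $L_p$-regularity on $\bbR_+$; so it suffices to prove the latter.

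Next I would invoke the characterization of half-line maximal regularity on the UMD space $X_0=D(A)\times L_p(\Omega)$ (which is UMD because $A$ is an isomorphism, so $X_0\cong L_p(\Omega)^2$): an operator has maximal $L_p$-regularity on $\bbR_+$ if and only if it is $\mathcal{R}$-sectorial of some angle $<\pi/2$ and its spectrum lies in the open right half-plane at positive distance from the imaginary axis, cf. \cite{DHP03,KuWe04}. The spectral part is immediate: by \prettyref{lem:spectral_bound}, $\sigma(\calA)\subset\{\Re\lambda\ge\lambda_0\}$, whence $\sigma(\calA-\omega)\subset\{\Re\lambda\ge\lambda_0-\omega\}$ with $\lambda_0-\omega>0$ since $\omega<\lambda_0$. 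It therefore remains to show that $\calA-\omega$ is $\mathcal{R}$-sectorial of some angle $\theta<\pi/2$.

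This is the crux, and the obstacle is that $\calA-\omega$ is obtained from the $\mathcal{R}$-sectorial operator $\mu+\calA$ (maximal regularity of the latter, from \prettyref{thm:A_principal_R-sectorial}, forces $\mathcal{R}$-sectoriality of some angle $\phi<\pi/2$) by a shift to the \emph{left} by $\mu+\omega$, and leftward shifts do not preserve $\mathcal{R}$-sectoriality in general. I would exploit that here the spectrum nevertheless stays inside a proper sector. Writing a spectral point of $\calA-\omega$ as $z-(\mu+\omega)$ with $z\in\overline{\Sigma_\phi}$ and $\Re z\ge\lambda_0+\mu$, one gets $|\Im(z-\mu-\omega)|=|\Im z|\le\tan\phi\,\Re z\le\tan\phi\,(1+\tfrac{\mu+\omega}{\lambda_0-\omega})\,\Re(z-\mu-\omega)$, so $\sigma(\calA-\omega)\subset\overline{\Sigma_{\phi'}}$ for an explicit $\phi'<\pi/2$. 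I would then fix $\theta\in(\max\{\phi,\phi'\},\pi/2)$ and verify $\mathcal{R}$-boundedness of $\{\lambda(\lambda+\calA-\omega)^{-1}:\lambda\in\Sigma_{\pi-\theta}\}$ by splitting the sector at a large radius $R$.

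For $|\lambda|\ge R$ I would write $\lambda+\calA-\omega=(\lambda-\mu-\omega)+(\mu+\calA)$ and factor $\lambda(\lambda+\calA-\omega)^{-1}=\tfrac{\lambda}{\lambda-\mu-\omega}\,(\lambda-\mu-\omega)\bigl((\lambda-\mu-\omega)+(\mu+\calA)\bigr)^{-1}$; for $R$ large the scalar factor is uniformly bounded and $\lambda-\mu-\omega$ stays inside the $\mathcal{R}$-sector of $\mu+\calA$, so this family is $\mathcal{R}$-bounded by the $\mathcal{R}$-sectoriality of $\mu+\calA$ together with stability of $\mathcal{R}$-bounds under bounded scalar multiples. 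For $|\lambda|\le R$ the admissible $\lambda$ form a compact subset of the resolvent region (the spectrum of $\calA-\omega$ sits in the open sector $\Sigma_\theta$ at positive distance from $0$), on which $\lambda\mapsto\lambda(\lambda+\calA-\omega)^{-1}$ is norm-continuous; its range is norm-compact, hence $\mathcal{R}$-bounded \cite{DHP03,KuWe04}. Since the union of two $\mathcal{R}$-bounded families is $\mathcal{R}$-bounded, $\calA-\omega$ is $\mathcal{R}$-sectorial of angle $\theta<\pi/2$, and with the spectral bound this yields its maximal $L_p$-regularity on $\bbR_+$; the first step then gives the theorem. I expect the delicate point to be exactly this gluing at infinity, where one must keep $\lambda-\mu-\omega$ inside the sector of $\mu+\calA$ while enlarging the opening angle only slightly.
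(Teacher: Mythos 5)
Your proposal is correct, but it takes a genuinely different route from the paper's. The paper handles the weight exactly as you do (citing \cite[Proposition III.1.5.3]{Ama95} rather than writing out the substitution $w=e^{\omega t}u$), but it never returns to $\calR$-sectoriality: from \prettyref{thm:A_principal_R-sectorial} it deduces that $\calA$ (hence $\calA-\omega$) has maximal $L_p$-regularity on \emph{bounded} intervals by multiplying the equation with $e^{-\mu t}$, and then invokes Dore's theorem \cite[Theorem 2.4]{Dor93}, which states precisely that finite-interval maximal regularity together with a negative spectral bound (supplied by \prettyref{lem:spectral_bound}) yields maximal regularity on $\bbR_+$. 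Your argument in effect reproves the special case of Dore's theorem needed here: you pass through Weis's characterization on the UMD space $X_0\cong L_p(\Omega)^2$, extract $\calR$-sectoriality of angle $\phi<\pi/2$ for $\mu+\calA$, and re-open the sector for the left-shifted operator by hand; the sector bound $\sigma(\calA-\omega)\subset\overline{\Sigma_{\phi'}}$ and the splitting at radius $R$ are sound, and your identification of the leftward shift as the delicate point is exactly right. What each approach buys: the paper's is shorter and works in an arbitrary Banach space, since Dore's theorem needs neither UMD nor $\calR$-boundedness (it is a semigroup-decay argument), whereas yours is self-contained modulo Weis's theorem and makes the mechanism explicit. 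One repair is needed in your bounded part: the inference ``the range is norm-compact, hence $\calR$-bounded'' is not a valid principle --- norm-compact families need not be $\calR$-bounded outside Hilbert space (in $L_p$ with $p>2$ one can build a norm-null sequence of rank-one operators, acting on disjointly supported functions, whose $\calR$-constants diverge, and the theorem covers all $p\in(1,\infty)$). What saves you is that $\lambda\mapsto\lambda(\lambda+\calA-\omega)^{-1}$ is holomorphic on an open neighborhood of the compact parameter set, and \emph{holomorphic} operator families are $\calR$-bounded on compact subsets --- this is the lemma actually provided by the references you cite \cite{DHP03,KuWe04}, proved via local power series expansions and the contraction principle; with ``norm-compact'' strengthened to ``compact plus holomorphic,'' your proof is complete.
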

\begin{proof}
   By \prettyref{thm:A_principal_R-sectorial} the operator $\mu+\calA$ has maximal $L_p$-regularity on $\bbR_+$, hence $\calA$ has maximal $L_p$-regularity on bounded intervals, which can be seen by multiplying the equation $\del_t v + \calA v = f$ with $e^{-\mu t}$. By \prettyref{lem:spectral_bound} the spectral bound of $-\calA$ is strictly negative and equals $-\lambda_0$ and therefore the spectral bound of $-\calA+\omega$ equals $\omega-\lambda_0$ and it is strictly negative if $\omega\in[0,\lambda_0)$. Then \cite[Theorem 2.4]{Dor93} implies that $\calA-\omega$ has maximal $L_p$-regularity on $\bbR_+$ for each $\omega\in[0,\lambda_0)$, that is
   \begin{align*}
     (\del_t+\calA-\omega,\tr_{t=0}) : W^1_p(\bbR_+;X_0) \cap L_p(\bbR_+;X_1) \to L_p(\bbR_+;X_0) \times X_\gamma
   \end{align*}
   is an isomorphism. The assertion follows from \cite[Proposition III.1.5.3]{Ama95}.
\end{proof}

\section{Global wellposedness for the nonlinear problem}

In this section we prove a slightly more general form of \prettyref{thm:GLOBWP}, which we formulate for the transformed system \eqref{eq:Westervelt_2}:
\begin{thm}\label{thmLWP}
  Let $p>\max\{n/2,n/4+1\}$, $\omega\in[0,\lambda_0)$ and $J=\bbR_+$. Then there exists $\rho>0$ such that for all $v_0\in X_\gamma$ with $\|v_0\|_{X_\gamma}<\rho$ the problem
  \begin{equation}\label{LWP_Wes1}
  v_t + \calA(v)v= F(v),\ t\in J,\quad v(0)=v_0
  \end{equation}
 admits a unique solution
  \begin{align*}
    v=(v_1,v_2)\in e^{-\omega}\bbE(J) := e^{-\omega}(W^1_p(J;X_0) \cap L_p(J;X_1)),
  \end{align*}
such that $\|v_1\|_{BUC(J\times\Omega)}<m$ for some $m<1/(2k)$. The solution map $v_0\mapsto v$ is a local isomorphism.
\end{thm}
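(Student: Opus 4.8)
The plan is to solve the quasilinear problem \eqref{LWP_Wes1} by the implicit function theorem, linearizing at the trivial equilibrium $v=0$ and using \prettyref{thm:MaxReg_calA}, applied to the constant-coefficient operator $\calA(0)$ (for which $1-2k(v_*)_1\equiv 1$), as the invertible linearization. To this end I would move the quasilinear and semilinear contributions to the right-hand side: setting
\begin{align*}
  N(v) := \calA(0)v - \calA(v)v + F(v), \qquad v\in e^{-\omega}\bbE(J),
\end{align*}
the equation \eqref{LWP_Wes1} becomes $v_t+\calA(0)v=N(v)$, $v(0)=v_0$, and I define
\begin{align*}
  \Phi(v,v_0):=\bigl(v_t+\calA(0)v-N(v),\ v(0)-v_0\bigr)
\end{align*}
on the open set $U:=\{v\in e^{-\omega}\bbE(J):\|v_1\|_{BUC(J\times\Omega)}<1/(2k)\}$, which contains $0$ and on which $\calA(v)$ and $F(v)$ are well defined, with values in $e^{-\omega}L_p(J;X_0)\times X_\gamma$. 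Zeros of $\Phi$ correspond exactly to solutions of \eqref{LWP_Wes1} in the admissible class, and $\Phi(0,0)=0$ since $F(0)=0$.

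The decisive structural point is that $N$ vanishes to second order at the origin. The quasilinear remainder $\calA(0)v-\calA(v)v$ has second component $\bigl((1-2kv_1)^{-1}-1\bigr)(c^2\Delta_D v_1+b\Delta_D v_2)$, which is at least bilinear in $v$ because the scalar factor vanishes at $v_1=0$, while $F(v)$ is quadratic in $v_2$; hence $N(0)=0$ and $DN(0)=0$. Consequently, once $\Phi$ is known to be continuously Fr\'echet differentiable, its partial derivative in $v$ at the origin reduces to $D_v\Phi(0,0)h=(h_t+\calA(0)h,\ h(0))=(\del_t+\calA(0),\tr_{t=0})h$, which by \prettyref{thm:MaxReg_calA} (applicable since $\omega\in[0,\lambda_0)$) is a topological isomorphism of $e^{-\omega}\bbE(J)$ onto $e^{-\omega}L_p(J;X_0)\times X_\gamma$. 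The implicit function theorem then produces, for $\|v_0\|_{X_\gamma}<\rho$ with $\rho$ sufficiently small, a unique $v=v(v_0)\in U$ near $0$ with $\Phi(v,v_0)=0$, depending $C^1$ on $v_0$; the invertibility of $D_v\Phi(0,0)$ makes $v_0\mapsto v$ a local isomorphism. Finally, shrinking $\rho$ forces $\|v_1\|_{BUC(J\times\Omega)}<m<1/(2k)$, so the constructed solution genuinely lies in the admissible class and the argument closes.

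The main obstacle is exactly the hypothesis deferred above: that $\Phi$, equivalently $N$, maps $U$ into $e^{-\omega}L_p(J;X_0)$ and is $C^1$ there, with bounds uniform in $T$ since $J=\bbR_+$. I would factor $N$ through pointwise products and smooth Nemytski\u{\i} operators, and it is here that the restriction $p>\max\{n/2,n/4+1\}$ enters. The bound $p>n/2$ gives $W^2_p(\Omega)\hookrightarrow C(\bar\Omega)$, so that $v_1\in BUC(J\times\Omega)$, the set $U$ is open, and $v_1\mapsto(1-2kv_1)^{-1}$ is a smooth map into $BUC(J\times\Omega)$ on $U$; multiplied against $c^2\Delta_D v_1+b\Delta_D v_2\in e^{-\omega}L_p(J;L_p(\Omega))$, this places the quasilinear remainder in $e^{-\omega}L_p(J;X_0)$ with smooth dependence on $v$. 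For the semilinear term I would invoke the trace embedding $e^{-\omega}\bbE(J)\hookrightarrow BUC(J;X_\gamma)$ together with $W^{2-2/p}_p(\Omega)\hookrightarrow L_{2p}(\Omega)$, guaranteed by $p>n/4+1$, to control $v_2^2$ in $L_p(\Omega)$; the exponential weight is harmless here, indeed favourable, because the quadratic structure absorbs it (e.g.\ $e^{\omega t}v_2^2=e^{-\omega t}(e^{\omega t}v_2)^2$ with $\omega\ge0$). Continuity of $v\mapsto D_v\Phi(v,v_0)$ follows from the same multiplier estimates applied to the (bi)linear remainders, which completes the verification that $\Phi\in C^1$ and hence the proof.
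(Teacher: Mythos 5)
Your proposal is correct and takes essentially the same route as the paper: your map $\Phi$ coincides algebraically with the paper's $H(v,v_0)=(v_t+\calA(v)v-F(v),\,\tr_{t=0}v-v_0)$, the linearization $D_v\Phi(0,0)=(\del_t+\calA(0),\tr_{t=0})$ is inverted by \prettyref{thm:MaxReg_calA} in the exponentially weighted spaces, and the $C^1$-regularity of the nonlinearity is verified via the same Sobolev embeddings (from $p>n/2$ and $p>n/4+1$) and weighted H\"{o}lder/Nemytski\u{\i} arguments before applying the implicit function theorem. The only cosmetic difference is that you isolate a second-order remainder $N(v)$ (noting $DN(0)=0$) where the paper differentiates the substitution operators $\calA(v)v$ and $F(v)$ directly, which yields the same linearization.
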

\prettyref{thm:GLOBWP} corresponds to the special case $\omega=0$. For the proof we will employ the following mapping property of substitution operators, which follows from the definition of the Fr{\'e}chet-derivative and the fundamental theorem of calculus.
\begin{lem}\label{lem:Subst}
  Let $k\in\{2,3,\ldots\}\cup\{\infty\}$, $T\in(0,\infty)$, $J=[0,T]$, and let $X$, $Y$ be Banach spaces and let $U$ be open in $X$. Suppose that $S\in C^k(U;Y)$ and each derivative $S^{(j)}$, with $j=2,3,\ldots k$, maps bounded sets into bounded sets. Then the substitution operator $\tilde S:=[u\mapsto S\circ u]$ belongs to the class
  $$C^{k-1}(BUC(J;U);BUC(J;Y)).$$
\end{lem}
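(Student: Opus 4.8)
The plan is to argue by induction on $k$, exploiting that $J$ is compact so that the image $u(J)$ of any $u\in BUC(J;U)=C(J;U)$ is a compact subset of the open set $U$. First I would record the preliminaries that do not use the induction. Since $\dist(u(J),X\setminus U)>0$, the set $BUC(J;U)$ is open in $BUC(J;X)$, so differentiability at points of $BUC(J;U)$ is meaningful; and since $S$ is continuous, $S\circ u$ is continuous on the compact interval $J$, hence automatically bounded and uniformly continuous, so $\tilde S$ maps $BUC(J;U)$ into $BUC(J;Y)$. Continuity of $\tilde S$ follows from the uniform continuity of $S$ on a compact neighborhood of $u(J)$: for $w$ close to $u$ the images $u(J)\cup w(J)$ lie in a fixed compact set $K\subset U$, and $\sup_t\|S(w(t))-S(u(t))\|_Y\le\omega_{S,K}(\|w-u\|_\infty)\to0$, where $\omega_{S,K}$ is the modulus of continuity of $S$ on $K$.

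For the base case $k=2$ I would show $\tilde S\in C^1$ with derivative given by pointwise multiplication. The natural candidate is $D\tilde S(u)h=[t\mapsto S'(u(t))h(t)]$. Writing, via the fundamental theorem of calculus,
\[
  S(u(t)+h(t))-S(u(t))-S'(u(t))h(t)=\int_0^1\bigl(S'(u(t)+\tau h(t))-S'(u(t))\bigr)h(t)\,d\tau,
\]
I would bound the $BUC(J;Y)$-norm of the left-hand side by $\sup_t\sup_{\tau\in[0,1]}\|S'(u(t)+\tau h(t))-S'(u(t))\|_{\calL(X,Y)}\,\|h\|_\infty$; uniform continuity of $S'$ on a compact neighborhood of $u(J)$ makes the supremum tend to $0$ as $\|h\|_\infty\to0$, which yields Fréchet differentiability with the claimed derivative. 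It is convenient to factor this derivative as $D\tilde S(u)=M\circ\widetilde{S'}(u)$, where $\widetilde{S'}$ is the substitution operator of $S'\in C^1(U;\calL(X,Y))$ and $M$ is the multiplication map $T(\cdot)\mapsto[h\mapsto T(\cdot)h(\cdot)]$, which is linear with $\|M(T)\|\le\|T\|_{BUC(J;\calL(X,Y))}$, hence bounded and therefore $C^\infty$. Since $\widetilde{S'}$ is continuous by the preliminaries applied to $S'$, so is $D\tilde S$, proving $\tilde S\in C^1$.

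The induction step then runs as follows: assuming the statement for $k-1$, apply it to $S'\in C^{k-1}(U;\calL(X,Y))$. The hypotheses transfer correctly, since the required boundedness of $(S')^{(j)}=S^{(j+1)}$ for $j\ge2$ is exactly the assumed boundedness of $S^{(3)},\dots,S^{(k)}$; this is where the boundedness condition on the higher derivatives is consumed, guaranteeing the uniform control of the multilinear derivative expressions entering the estimates. The induction hypothesis yields $\widetilde{S'}\in C^{k-2}(BUC(J;U);BUC(J;\calL(X,Y)))$, and since $M$ is $C^\infty$ the factorization gives $D\tilde S=M\circ\widetilde{S'}\in C^{k-2}$; a map whose Fréchet derivative is $C^{k-2}$ is itself $C^{k-1}$, closing the induction. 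The case $k=\infty$ follows because $\tilde S\in C^m$ for every finite $m$, whence $\tilde S\in C^\infty$. I expect the base-case differentiability to be the main obstacle: correctly identifying the derivative and showing the fundamental-theorem-of-calculus remainder is $o(\|h\|_\infty)$ uniformly in $t$, for which the reduction to uniform continuity of $S'$ on the compact image $u(J)$—enabled by compactness of $J$—is the crucial point.
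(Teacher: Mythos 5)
Your overall architecture --- identifying $D\tilde S(u)h=[t\mapsto S'(u(t))h(t)]$ via the fundamental theorem of calculus, factoring $D\tilde S=M\circ\widetilde{S'}$ through the bounded linear map $M$ into multiplication operators, and inducting on $k$ --- is precisely the argument the paper alludes to when it says the lemma ``follows from the definition of the Fr\'echet-derivative and the fundamental theorem of calculus.'' However, your mechanism for the uniformity of the remainder estimate contains a technical slip: in an infinite-dimensional Banach space $X$ the compact set $u(J)$ has \emph{no} compact neighborhood, and there is no fixed compact $K\subset U$ containing $w(J)$ for all $w$ near $u$ (take $w_n=u+\eps e_n$ with $(e_n)$ a bounded sequence without convergent subsequence). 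So ``uniform continuity of $S$ (resp.\ $S'$) on a compact neighborhood of $u(J)$'' is not available as stated. The repair is standard: continuity of $S'$ together with compactness of $u(J)$ gives, for every $\eps>0$, a $\delta>0$ such that $\|S'(y)-S'(x)\|_{\calL(X,Y)}<\eps$ whenever $x\in u(J)$ and $\|y-x\|_X<\delta$ (otherwise choose $x_n\in u(J)$, $y_n$ with $\|y_n-x_n\|\to0$ violating this and pass to a convergent subsequence of $(x_n)$). With this replacement your remainder estimate, the continuity of $\tilde S$, and the continuity of $\widetilde{S'}$ all go through, so your proof of the lemma \emph{as literally stated} (with $T\in(0,\infty)$, hence $J$ compact) is sound.

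The more telling point is that your proof never actually uses the hypothesis that $S^{(j)}$, $j=2,\ldots,k$, maps bounded sets into bounded sets --- your remark that this hypothesis is ``consumed'' in the induction step is inaccurate, since the base case needs only continuity of $S'$ on the compact image, and the induction merely passes the hypothesis along without invoking it. Indeed, on compact $J$ your compactness argument proves the stronger conclusion $\tilde S\in C^k$ with no boundedness assumption at all. The boundedness hypothesis (and the loss from $C^k$ to $C^{k-1}$) is what the intended mean-value/FTC argument requires when compactness of images is unavailable: boundedness of $S^{(j)}$ on bounded sets yields uniform continuity of $S^{(j-1)}$ on bounded sets, so the remainder is $o(\|h\|_\infty)$ uniformly when $u(J)$ is merely bounded, and the top derivative is spent certifying uniform continuity of the one below it. This distinction matters for the paper, which in Step~2 of the proof of Theorem~3.1 applies the lemma with $J=\bbR_+$, where $u(J)$ is bounded but not compact; your compactness-based argument does not cover that case, whereas the boundedness-based argument does. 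So: correct after the repair for the stated lemma, but by a genuinely different (compactness-driven) mechanism that proves more in the compact case and less in the non-compact case the paper actually needs.
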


\begin{proof}[Proof of Theorem \ref{thmLWP}]
%
{\bf Step 1:} In this step we prove that 
$$(\calA,F)\in C^\infty(V;\calL(X_1,X_0)\times X_0),$$
where $V \subset X_\gamma$ is defined as follows. Fix $m\in(0,(2k)^{-1})$ and let
$$V:=\left\{w=(w_1,w_2)\in X_\gamma:\|w_1\|_\infty<m\right\},$$
then $V$ is open in $X_\gamma$, since $X_\gamma\hookrightarrow C(\overline{\Omega})\times D_A(1-1/p,p)$, provided $p>n/2$. 

First of all, we write $F$ in the form $F(v_1,v_2)=(0,G_1(v_1)\cdot G_2(v_2))$, where $G_1(v_1)=(1-2kv_1)^{-1}$, $G_2(v_2)=2v_2^2$ and "$\cdot$" denotes pointwise multiplication. Then the maps
\begin{align*}
  G_1 : \{u\in L_\infty(\Omega): \|u\|_{L_\infty(\Omega)}<(2k)^{-1} \} \to L_\infty(\Omega), \quad G_2 : L_{2p}(\Omega) \to L_p(\Omega)
\end{align*}
are $C^\infty$-maps. Moreover, by H\"{o}lder's inequality, the pointwise multiplication is bilinear and continuous (and therefore $C^\infty$) from $L_\infty(\Omega) \times L_p(\Omega)$ to $L_p(\Omega)$. Finally, the injection of $V \subset W^2_p(\Omega) \times W^{2-2/p}_p(\Omega)$ into $L_\infty(\Omega)\times L_{2p}(\Omega)$ is linear and bounded, provided that $p>n/2$ and $p>1+n/4$ by Sobolev's embedding theorem. This proves that $F\in C^\infty(V;X_0)$.

Next, we decompose
\begin{align}\label{LWP_Wes3}
  \calA(v) = \calA_1(v) \calA_2 := \begin{bmatrix} 1 & 0 \\ 0 & \frac{1}{1-2k v_1} \end{bmatrix} \begin{bmatrix} 0 & -I \\ c^2 A & b A \end{bmatrix}, \quad v=(v_1,v_2)\in V.
\end{align}
Here the second matrix $\calA_2$ belongs to $C^\infty(V;\calL(X_1,X_0))$ since it is independent of $v$ and $A:D(A)\to L_p(\Omega)$ is linear and bounded. The mapping $[v_1\mapsto \frac{1}{1-2kv_1}]$ is $C^\infty$ from $\{v_1\in L_\infty(\Omega): \|v_1\|_{L_\infty(\Omega)}<(2k)^{-1} \}$ into $L_\infty(\Omega)$ and therefore $[(v_1,w_2)\mapsto \frac{1}{1-2k v_1} w_2]$ is $C^\infty$ from $\{v_1\in L_\infty(\Omega): \|v_1\|_{L_\infty(\Omega)}<(2k)^{-1} \} \times L_p(\Omega)$ to $L_p(\Omega)$. Then the continuous embedding of $W^2_p(\Omega)$ into $L_\infty(\Omega)$ for $p>n/2$ implies $\calA_1\in C^\infty(V;\calL(X_0,X_0))$. We conclude that $\calA\in C^\infty(V;\calL(X_1,X_0))$.

\noindent  {\bf Step 2:} In this step we consider the substitution operators $\tilde F : u\mapsto F\circ u$ and $\tilde \calA : u \mapsto \calA\circ u$ for time-dependent functions $u$ on $J\times\Omega$ and prove that they are also $C^\infty$ in appropriate spaces. Let
$$i:e^{-\omega}(W^1_p(J;X_0) \cap L_p(J;X_1))\to e^{-\omega}BUC(J;X_\gamma)$$
	be the inclusion map by \cite[Theorem III.4.10.2. \& Remark III.4.10.9.(a)]{Ama95}. Since $e^{-\omega}BUC(J;V)$ is open in $e^{-\omega}BUC(J;X_\gamma)$ it follows that
$$W:=i^{-1}\left(e^{-\omega}BUC(J;V)\right)\subset e^{-\omega}(W^1_p(J;X_0) \cap L_p(J;X_1))=e^{-\omega}\bbE(J),$$
is open. We show that $\tilde{F}\in C^\infty(W;e^{-\omega}L_p(J;X_0))$. Again we use the decomposition $\tilde F(v_1,v_2) = (0, \tilde G_1(v_1)\cdot \tilde G_2(v_2))$ with $\tilde G_1(v_1)=(1-2kv_1)^{-1}$, $\tilde G_2(v_2)=2v_2^2$ and prove that $\tilde{F}$ is the composition of $C^\infty$-maps. First observe that the injections
\begin{align*}
  e^{-\omega}\bbE(J) \hookrightarrow e^{-\omega} L_\infty(J\times\Omega) \times \left( e^{-\omega} L_\infty(J;L_{2p}(\Omega)) \cap e^{-\omega} L_p(J;L_{2p}(\Omega)) \right)
\end{align*}
and $e^{-\omega} L_\infty(J\times\Omega) \hookrightarrow L_\infty(J\times\Omega)$ are continuous and thus $C^\infty$. Then, similarly as in Step 1, we conclude that the map $\tilde G_1 : v_1 \mapsto (1-2k v_1)^{-1}$,
\begin{align*}
  \tilde G_1 : \left\{ v_1\in L_\infty(J\times\Omega) : \|v_1\|_{L_\infty(J\times\Omega)}<(2k)^{-1} \right\} \to L_\infty(J\times\Omega)
\end{align*}
is $C^\infty$. H\"{o}lder's inequality implies that pointwise multiplication
\begin{align*}
  (u,v)\mapsto uv : e^{-\omega} L_\infty(J;L_{2p}(\Omega)) \times e^{-\omega} L_p(J;L_{2p}(\Omega)) \to e^{-\omega} L_p(J;L_p(\Omega))
\end{align*}
is continuous. We conclude that
\begin{align*}
  \tilde G_2 : v_2\mapsto 2v_2^2, \; e^{-\omega} L_\infty(J;L_{2p}(\Omega)) \cap e^{-\omega} L_p(J;L_{2p}(\Omega)) \to e^{-\omega} L_p(J\times\Omega)
\end{align*}
is $C^\infty$. Since pointwise multiplication is also continuous as a mapping
\begin{align*}
  L_\infty(J\times\Omega) \times e^{-\omega} L_p(J\times\Omega) \to e^{-\omega} L_p(J\times\Omega),
\end{align*}
we obtain that $\tilde F : W \to e^{-\omega} L_p(J;X_0)$ is $C^\infty$.

In Step 1 we have shown that $\calA \in C^\infty(V;\calL(X_0,X_1))$. The previously used decomposition of $\calA$ implies that all its derivatives are bounded on $V$. From \prettyref{lem:Subst} and the embedding $e^{-\omega}BUC(J;X_\gamma)\hookrightarrow BUC(J;X_\gamma)$ we infer that
$$\calA\in C^\infty\left(e^{-\omega}BUC(J;V);BUC(J;\calL(X_1,X_0))\right).$$
Here we use the same notation for $\calA$ and the substitution operator induced by $\calA$. Furthermore it is easy to see that the mapping
$$[(B,v)\mapsto Bv]:BUC(J;\calL(X_1,X_0))\times e^{-\omega}L_p(J;X_1)\to e^{-\omega}L_p(J;X_0)$$
is continuous and bilinear, hence $C^\infty$. Defining $G(v):=\calA(v)v$ it follows that $G\in C^\infty(W;e^{-\omega}L_p(J;X_0))$.

\noindent {\bf Step 3:} Define a mapping $H:W\times X_\gamma\to e^{-\omega}L_p(J;X_0)\times X_\gamma$ by
$$H(v,v_0):=(v_t+\calA(v) v-F(v),\tr_{t=0}v-v_0).$$
By Step 2, we may conclude that
$$H\in C^1(W\times X_\gamma;e^{-\omega}L_p(J;X_0)\times X_\gamma),$$
since $\partial_t$ and $\tr_{t=0}$ are linear operators on $W$.

 Note that $v\in W$ is a solution of \eqref{LWP_Wes1} if and only if $H(v,v_0)=0$. Obviously it holds that $H(0,0)=0$ and
 $$D_vH(0,0)w= (w_t+\calA(0) w,\tr_{t=0}w),\quad w\in W.$$
 By Theorem \ref{thm:MaxReg_calA} the operator
 $$(\partial_t+\calA(0),\tr_{t=0}):e^{-\omega}(W^1_p(J;X_0) \cap L_p(J;X_1))\to e^{-\omega}L_p(J;X_0)\times X_\gamma$$
 is an isomorphism with bounded inverse, hence by the implicit function theorem there exists an open ball $\bbB_\rho(0)\subset X_\gamma$ and a unique function $\phi\in C^1(\bbB_\rho(0);W)$ such that $H(\phi(v_0),v_0)=0$ for all $v_0\in \bbB_\rho(0)$ and $\phi(0)=0$. Therefore $v:=\phi(v_0)\in W$ is the unique solution of \eqref{LWP_Wes1} and $i(v)\in e^{-\omega}BUC(J;V)$, hence $\|v_1(t)\|_\infty<m<1/(2k)$ for all $t\in J$. \qedhere

\end{proof}

\section{Exponential Stability}

Let
$$\bbE(\mathbb{R}_+):=W^1_p(\bbR_+;X_0) \cap L_p(\bbR_+;X_1).$$
By Theorem \ref{thmLWP} with $T=\infty$ there exists $\rho>0$ and a function $\phi$ in the class $C^1(\bbB_\rho(0);e^{-\omega}\bbE(\bbR_+))$ with $\phi(0)=0$ such that $v=\phi(v_0)\in e^{-\omega}\bbE(\bbR_+)$ is the unique solution of \eqref{LWP_Wes1}.
Then we obtain
$$\phi(v_0)=\int_0^1\frac{d}{d\theta} \phi(\theta v_0)\, d\theta=\left(\int_0^1\phi'(\theta v_0)d\, \theta\right) v_0,$$
for all $v_0\in\bbB_\rho(0)$. Since $\phi$ is $C^1$, we see that for each $\eps>0$ there exists $\rho_0=\rho_0(0)>0$ such that
$$\|\phi'(v_0)-\phi'(0)\|_{\calL(X_\gamma;e^{-\omega}\bbE(\bbR_+))}\le\eps,$$
whenever $\|v_0\|_{X_\gamma}\le\rho_0<\rho$. It follows that $\phi'(v_0)$ is uniformly bounded in $\calL(X_\gamma;e^{-\omega}\bbE(\bbR_+))$ w.r.t. $v_0$ if $\|v_0\|_{X_\gamma}\le\rho_0<\rho$. Therefore we obtain the estimate
$$e^{\omega t}\|v(t)\|_{X_\gamma}\le C_\gamma\|\phi(v_0)\|_{e^{-\omega}\bbE(\bbR_+)}\le C\|v_0\|_{X_\gamma},$$
which is valid for all $t\ge 0$, $|v_0|_{X_\gamma}\le\rho_0<\rho$ and $C>0$ does not depend on $v_0$ and $t$. This implies exponential stability of the equilibrium $0$ in $X_\gamma$ and the first estimate in \prettyref{thm:ExpDec}. If $\omega=0$ then we still have $\|v(t)\|_{X_\gamma}\to 0$ as $t\to \infty$, since $v\in\bbE(\bbR_+)\hookrightarrow C_0(\bbR_+;X_\gamma)$.

But we can show even more. By \cite[Theorem 5.1]{Pru02b} we obtain that
\begin{equation}\label{EXS_Wes0}
v\in W_p^2(\delta,T;X_0)\cap W_p^1(\delta,T;X_1),
\end{equation}
for all $0<\delta<T<\infty$, since $(\calA,F)\in C^1(V;\calL(X_1,X_0)\times X_0)$ and since $\calA(v(t))$ has maximal $L_p$-regularity for each fixed $t\in [0,T]$, by Theorem \ref{thm:MaxReg_calA} (recall that $\|v_1(t)\|_{\infty}<m<1/(2k)$ for all $t\ge 0$). In particular, \eqref{EXS_Wes0} implies that $v_t(t)\in X_\gamma$ for each $t>0$.

We will show that we may even set $T=\infty$ in \eqref{EXS_Wes0} and that $e^{\omega t}v_t(t)\to 0$ in $X_\gamma$ as $t\to\infty$. The result on higher parabolic regularity enables us to differentiate equation \eqref{LWP_Wes1} w.r.t. $t>\delta$ to the result
$$v_{tt}(t)+[\calA'(v(t))v_t(t)]v(t)+\calA(v(t))v_t(t)=F'(v(t))v_t(t),\quad t>\delta.$$
We now distinguish between the function $\bar{w}:=v_t$ and the fixed solution $v$, hence $\bar w$ is a solution to the linear problem
\begin{equation}\label{EXS_Wes1}
\bar{w}_t+\calA(0) \bar{w}(t) =B(t) \bar{w}(t) , \quad t>\delta, \quad \bar{w}(\delta)=v_t(\delta),
\end{equation}
where
$$B(t)\bar{w}(t) :=F'(v(t)) \bar{w}(t)  +[\calA(0)-\calA(v(t))] \bar{w}(t)-[\calA'(v(t)) \bar{w}(t) ]v(t).$$
The operator $B(t)$ is well-defined, since $v(t)\in X_1=D(A)\times D(A)$ for a.a. $t>0$.

We claim that for each $\eps>0$ there exists a sufficiently large $\delta>0$ such that the estimate
\begin{equation}\label{EXS_Wes1a}
\|B(\cdot)w\|_{e^{-\omega}L_p(\delta,\infty;X_0)}\le\eps\|w\|_{e^{-\omega}\bbE(\delta,\infty)}
\end{equation}
holds for each $w\in e^{-\omega}{_0\bbE(\delta,\infty)}$, where $\0\bbE(\delta,\infty):=\{u\in\bbE(\delta,\infty) : u(\delta)=0 \}$. Since $e^{\omega t}v(t)\to 0$ in $X_\gamma$ as $t\to\infty$ and $v(t)\in V$ for all $t\ge 0$, the desired estimate follows for the first and the second term, so we concentrate on the third one. We use that the spaces $e^{-\omega}{\bbE(\delta,\infty)}$ are continuously embedded into $e^{-\omega}BUC(\delta,\infty;X_\gamma)$ and the embedding constant is independent of $\delta$, which follows from identities like
\begin{align*}
  \|u\|_{e^{-\omega}\bbE(\delta,\infty)} &= \|[t\mapsto e^{\omega(t+\delta)}u(t+\delta)]\|_{\bbE(0,\infty)}.
\end{align*}
From \eqref{LWP_Wes3} we obtain
$$\|[\calA'(v)w]v\|_{e^{-\omega}L_p(\delta,\infty;X_0)}=2k\left\|\frac{w_1}{(1-2kv_1)^2}(c^2Av_1+bAv_2)\right\|_{e^{-\omega}L_p(\delta,\infty;L_p(\Omega))}.$$
Since $\|v_1(t)\|_\infty<m<1/(2k)$ for all $t\ge 0$ and $w\in e^{-\omega}{_0\bbE(\delta,\infty)}\hookrightarrow e^{-\omega}BUC(\delta,\infty;X_\gamma)$ and $D(A)\hookrightarrow L_\infty(\Omega)$ it follows that
\begin{align*}
&\|[\calA'(v)w]v\|_{e^{-\omega}L_p(\delta,\infty;X_0)}\\
&\quad\le C_m\|w\|_{e^{-\omega}BUC(\delta,\infty;X_\gamma)}\|(c^2Av_1+bAv_2)\|_{e^{-\omega}L_p(\delta,\infty;L_p(\Omega))}\\
&\quad\le C_m\|w\|_{e^{-\omega}\bbE(\delta,\infty)}\|v\|_{e^{-\omega}L_p(\delta,\infty;X_1)}.
\end{align*}
Since $v\in e^{-\omega}\bbE(\bbR_+)$ is the solution of \eqref{LWP_Wes1} and thereby a fixed function, it follows that $\|v\|_{e^{-\omega}L_p(\delta,\infty;X_1)}\le \varepsilon /C_m$, provided that $\delta>0$ is sufficiently large. This yields the claim.

Finally we write
$$\partial_t+\calA(0)-B(t)=[I-B(t)(\partial_t+\calA(0))^{-1}](\partial_t+\calA(0)).$$
We show that the operator on the left side is an isomorphism from $e^{-\omega} {_0\bbE(\delta,\infty)}$ to $e^{-\omega}L_p(\delta,\infty;X_0)$. To this end, it suffices to show that the first operator on the right hand side is an isomorphism from $e^{-\omega}L_p(\delta,\infty;X_0)$ to $e^{-\omega}L_p(\delta,\infty;X_0)$. We have
\begin{align*}
\|B(\cdot)(\partial_t+\calA(0))^{-1}h\|_{e^{-\omega}L_p(\delta,\infty;X_0)}&\le\eps\|(\partial_t+\calA(0))^{-1}h\|_{e^{-\omega}\bbE(\delta,\infty)}\\
&\le\eps C\|h\|_{e^{-\omega}L_p(\delta,\infty;X_0)}.
\end{align*}
Here the maximal regularity constant $C>0$ of $(\partial_t+\calA(0))$ does not depend on $\delta\in\bbR_+$. To see this, let $M_\omega$ denote the norm of $(\del_t-\omega+\calA(0))^{-1}:L_p(\bbR_+;X_0)\to\bbE(\bbR_+)$. Using a translation from $(\delta,\infty)$ to $(0,\infty)$ we obtain
\begin{align*}
  \|u\|_{e^{-\omega}\bbE(\delta,\infty)} &= \|[t\mapsto e^{\omega(t+\delta)}u(t+\delta)]\|_{\bbE(\bbR_+)}\\
  &\leq M_\omega \|[t\mapsto (\del_t-\omega+\calA(0)) (e^{\omega(t+\delta)}u(t+\delta))] \|_{L_p(\bbR_+;X_0)} \\
  &= M_\omega \|(\del_t+\calA(0)) u\|_{e^{-\omega}L_p(\delta,\infty;X_0)}.
\end{align*}
If we choose $\eps C<1$, hence $\delta>0$ sufficiently large, a Neumann series argument implies that the operator $\partial_t+\calA(0)-B(\cdot)$ is invertible from $e^{-\omega}{_0\bbE(\delta,\infty)}$ to $e^{-\omega}L_p(\delta,\infty;X_0)$ with bounded inverse.

From now on we fix such a $\delta>0$, say $\delta^*>0$, and define $w(\delta^*):=v_t(\delta^*)\in X_\gamma$. By Theorem \ref{thm:MaxReg_calA} the function $w_*(t):=e^{-\calA(0) (t-\delta^*)}w(\delta^*)$ belongs to $e^{-\omega} {\bbE(\delta^*,\infty)}$. Then the shifted function $\tilde{w}:=\bar{w}-w_*=v_t-w_*$ solves the problem
\begin{equation}\label{EXS_Wes2}
\tilde{w}_t(t) +\calA(0)\tilde{w}(t) -B(t)\tilde{w}(t) =B(t)w_*(t) ,\quad t>\delta^*,\quad \tilde{w}(\delta^*)=0.
\end{equation}
It is easily seen that $B(\cdot)w_*$ is in $e^{-\omega}L_p(\delta^*,\infty;X_0)$. By the preceding arguments it follows that $\tilde{w}\in e^{-\omega}{_0\bbE(\delta^*,\infty)}$ or equivalently $v_t\in e^{-\omega}{\bbE(\delta^*,\infty)}$. Property \eqref{EXS_Wes0} with $T=\delta^*$ thus yields  $v_t\in e^{-\omega}{\bbE(\delta,\infty)}$ for \emph{each} $\delta>0$ (the exponential weight plays no role on the bounded interval $(\delta,\delta^*)$). This implies in particular that $e^{\omega t} v_t(t)\to 0$ in $X_\gamma=D(A)\times D_A(1-1/p,p)$ as $t\to\infty$. We summarize these results in
\begin{thm}
 Let $n\in\mathbb{N}$, $p>\max\{n/2,n/4+1\}$, $\omega\in [0,\lambda_0)$ and let $v_0$ satisfy the conditions of Theorem \ref{thmLWP}. Then the equilibrium $v_*=0$ of \eqref{LWP_Wes1} is stable in $X_\gamma$ and $e^{\omega t}\|v(t)\|_{X_\gamma}\to 0$ as $t\to \infty$. Moreover, it holds that $v_t\in e^{-\omega}\bbE(\delta,\infty)$ for each $\delta>0$, hence $e^{\omega t}\|v_t(t)\|_{X_\gamma}\to 0$ as $t\to \infty$.
\end{thm}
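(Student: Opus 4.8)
The plan is to assemble the three assertions of the theorem — stability together with the exponential decay of $v$, the weighted regularity $v_t\in e^{-\omega}\bbE(\delta,\infty)$, and the decay of $v_t$ — and to treat them in order of increasing difficulty, drawing on the solution map and linear theory already at our disposal.

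First I would establish stability and the decay $e^{\omega t}\|v(t)\|_{X_\gamma}\to0$. Since the solution map $\phi$ furnished by Theorem~\ref{thmLWP} is $C^1$ with $\phi(0)=0$, the fundamental theorem of calculus gives $\phi(v_0)=\bigl(\int_0^1\phi'(\theta v_0)\,d\theta\bigr)v_0$; local boundedness of $\phi'$ near the origin then yields $\|\phi(v_0)\|_{e^{-\omega}\bbE(\bbR_+)}\le C\|v_0\|_{X_\gamma}$, and the continuous embedding $e^{-\omega}\bbE(\bbR_+)\hookrightarrow e^{-\omega}BUC(\bbR_+;X_\gamma)$ converts this into the pointwise bound $e^{\omega t}\|v(t)\|_{X_\gamma}\le C\|v_0\|_{X_\gamma}$. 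Because the weighted solution space embeds into functions vanishing at infinity, the convergence $e^{\omega t}\|v(t)\|_{X_\gamma}\to0$ follows.

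The core of the argument is the weighted regularity $v_t\in e^{-\omega}\bbE(\delta,\infty)$. My plan here is fourfold: (i) invoke higher parabolic regularity (\cite[Theorem 5.1]{Pru02b}) to obtain $v\in W_p^2(\delta,T;X_0)\cap W_p^1(\delta,T;X_1)$ on bounded intervals, which justifies differentiating \eqref{LWP_Wes1} in $t$; (ii) read the differentiated equation as a linear, nonautonomous perturbation $\bar w_t+\calA(0)\bar w-B(t)\bar w=0$ of the constant-coefficient operator $\calA(0)$; (iii) prove the smallness estimate \eqref{EXS_Wes1a}, $\|B(\cdot)w\|_{e^{-\omega}L_p(\delta,\infty;X_0)}\le\eps\|w\|_{e^{-\omega}\bbE(\delta,\infty)}$ for large $\delta$, resting on the decay of $v$ in $e^{-\omega}L_p(\delta,\infty;X_1)$ together with the uniform bound $\|v_1(t)\|_\infty<m<1/(2k)$ and the embedding into $BUC(\delta,\infty;X_\gamma)$; and (iv) factor $\del_t+\calA(0)-B(\cdot)=[I-B(\cdot)(\del_t+\calA(0))^{-1}](\del_t+\calA(0))$ and run a Neumann series once $\eps C<1$. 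To handle the inhomogeneous initial value $\bar w(\delta^*)=v_t(\delta^*)$, I would subtract the semigroup orbit $w_*(t)=e^{-\calA(0)(t-\delta^*)}v_t(\delta^*)$, which already lies in $e^{-\omega}\bbE(\delta^*,\infty)$ by Theorem~\ref{thm:MaxReg_calA}, so that $\tilde w=\bar w-w_*$ solves a problem with zero initial data and right-hand side $B(\cdot)w_*\in e^{-\omega}L_p(\delta^*,\infty;X_0)$, to which the invertibility of step (iv) applies. Patching the bounded interval $(\delta,\delta^*)$ back on, where the exponential weight is harmless, then yields $v_t\in e^{-\omega}\bbE(\delta,\infty)$ for every $\delta>0$.

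The main obstacle is that the entire perturbation scheme must be uniform in $\delta$: both the smallness threshold for $B(\cdot)$ and the maximal-regularity constant of $\del_t+\calA(0)$ have to be independent of the left endpoint. I would secure the latter through the translation identity $\|u\|_{e^{-\omega}\bbE(\delta,\infty)}=\|[t\mapsto e^{\omega(t+\delta)}u(t+\delta)]\|_{\bbE(\bbR_+)}$, which reduces every shifted problem to the single operator $\del_t-\omega+\calA(0)$ on $\bbR_+$ whose inverse has the fixed norm $M_\omega$; and the former by exploiting that the decaying tail $\|v\|_{e^{-\omega}L_p(\delta,\infty;X_1)}$ can be pushed below $\eps/C_m$ by enlarging $\delta$. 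The final assertion $e^{\omega t}\|v_t(t)\|_{X_\gamma}\to0$ is then immediate from $v_t\in e^{-\omega}\bbE(\delta,\infty)\hookrightarrow e^{-\omega}BUC(\delta,\infty;X_\gamma)$, again using that weighted elements of the solution space vanish at infinity.
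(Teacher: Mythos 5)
Your proposal is correct and follows essentially the same route as the paper: the fundamental-theorem-of-calculus argument on the solution map $\phi$ for stability, higher regularity via \cite[Theorem 5.1]{Pru02b} to justify differentiating \eqref{LWP_Wes1}, the smallness estimate \eqref{EXS_Wes1a} for $B(\cdot)$, the factorization $\del_t+\calA(0)-B(\cdot)=[I-B(\cdot)(\del_t+\calA(0))^{-1}](\del_t+\calA(0))$ with a Neumann series, the subtraction of the semigroup orbit $w_*(t)=e^{-\calA(0)(t-\delta^*)}v_t(\delta^*)$ to reduce to zero initial data, and the translation identity making the maximal-regularity constant $M_\omega$ independent of $\delta$. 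No gaps; the uniformity-in-$\delta$ issue you flag as the main obstacle is exactly the point the paper addresses, and in the same way.
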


\bibliographystyle{plain}
\bibliography{MeWi11_Lit}

\begin{thebibliography}{10}

\bibitem{AdFo03}
Robert~A. Adams and John J.~F. Fournier.
\newblock {\em Sobolev spaces}, volume 140 of {\em Pure and Applied Mathematics
  (Amsterdam)}.
\newblock Elsevier/Academic Press, Amsterdam, second edition, 2003.

\bibitem{Ama95}
Herbert Amann.
\newblock {\em Linear and quasilinear parabolic problems. {V}ol. {I}:
  {A}bstract linear theory.}, volume~89 of {\em Monographs in Mathematics}.
\newblock Birkh\"auser Boston Inc., Boston, MA, 1995.

\bibitem{ClKaVe09}
Christian Clason, Barbara Kaltenbacher, and Slobodan Veljovi{\'c}.
\newblock Boundary optimal control of the {W}estervelt and the {K}uznetsov
  equations.
\newblock {\em J. Math. Anal. Appl.}, 356(2):738--751, 2009.

\bibitem{DHP03}
Robert Denk, Matthias Hieber, and Jan Pr{\"u}ss.
\newblock {$\mathcal R$}-boundedness, {F}ourier multipliers and problems of
  elliptic and parabolic type.
\newblock {\em Mem. Amer. Math. Soc.}, 166(788):viii+114, 2003.

\bibitem{DHP07}
Robert Denk, Matthias Hieber, and Jan Pr\"{u}ss.
\newblock Optimal $l^{p}$- $l^{q}$-estimates for parabolic boundary value
  problems with inhomogeneous data.
\newblock {\em Math. Z.}, 257(1):193--224, 2007.

\bibitem{Dor93}
Giovanni Dore.
\newblock {$L^p$} regularity for abstract differential equations.
\newblock In {\em Functional analysis and related topics, 1991 ({K}yoto)},
  volume 1540 of {\em Lecture Notes in Math.}, pages 25--38. Springer, Berlin,
  1993.

\bibitem{Gri69}
Pierre Grisvard.
\newblock \'{E}quations diff\'erentielles abstraites.
\newblock {\em Ann. Sci. \'Ecole Norm. Sup. (4)}, 2:311--395, 1969.

\bibitem{HaBl98}
Mark~F. Hamilton and David~T. Blackstock.
\newblock {\em Nonlinear acoustics}.
\newblock Academic Press, 1998.

\bibitem{HiPr97}
Matthias Hieber and Jan Pr\"{u}ss.
\newblock Heat kernels and maximal {$L\sp p$}-{$L\sp q$} estimates for
  parabolic evolution equations.
\newblock {\em Comm. Partial Differential Equations}, 22(9-10):1647--1669,
  1997.

\bibitem{Kal10}
Barbara Kaltenbacher.
\newblock Boundary observability and stabilization for {W}estervelt type wave
  equations without interior damping.
\newblock {\em Appl. Math. Optim.}, 62(3):381--410, 2010.

\bibitem{KaLa09}
Barbara Kaltenbacher and Irena Lasiecka.
\newblock Global existence and exponential decay rates for the {W}estervelt
  equation.
\newblock {\em Discrete Contin. Dyn. Syst. Ser. S}, 2(3):503--523, 2009.

\bibitem{Kal07}
Manfred Kaltenbacher.
\newblock {\em Numerical simulation of mechatronic sensors and actuators}.
\newblock Springer, 2007.

\bibitem{KuWe04}
Peer~C. Kunstmann and Lutz Weis.
\newblock Maximal {$L_p$}-regularity for parabolic equations, {F}ourier
  multiplier theorems and {$H^\infty$}-functional calculus.
\newblock In {\em Functional analytic methods for evolution equations}, volume
  1855 of {\em Lecture Notes in Math.}, pages 65--311. Springer, Berlin, 2004.

\bibitem{LSU68}
O.A. Ladyzhenskaya, V.A. Solonnikov, and N.N. Ural'tseva.
\newblock {\em Linear and quasi-linear equations of parabolic type. Translated
  from the Russian by S. Smith.}
\newblock Translations of Mathematical Monographs. 23. Providence, RI: American
  Mathematical Society (AMS). XI, 648 p., 1968.

\bibitem{LeSeWo08}
R.~Lerch, G.~Sessler, and D.~Wolf.
\newblock {\em Technische Akustik: Grundlagen und Anwendungen}.
\newblock Springer, 2008.

\bibitem{Pru02b}
Jan Pr{\"u}ss.
\newblock Maximal regularity for evolution equations in {$L\sb p$}-spaces.
\newblock {\em Conf. Semin. Mat. Univ. Bari}, (285):1--39 (2003), 2002.

\bibitem{Tri83}
Hans Triebel.
\newblock {\em Theory of function spaces}, volume~78 of {\em Monographs in
  Mathematics}.
\newblock Birkh\"auser Verlag, Basel, 1983.

\end{thebibliography}

\end{document}